\documentclass[11pt,english]{article}
\usepackage[letterpaper]{geometry}
\geometry{verbose}
\usepackage{amsmath,amssymb,amsthm}
\usepackage{graphicx}
\usepackage{setspace}
\usepackage{babel}
\usepackage{float}

\usepackage{xcolor}
\usepackage{subcaption}
\usepackage{comment}

\setlength{\textheight}{24cm}
\setlength{\topmargin}{-2.0cm}

\theoremstyle{plain}
\newtheorem{theorem}{Theorem}
\newtheorem*{theorem*}{Theorem}
\newtheorem{lemma}{Lemma}
\newtheorem{corollary}{Corollary}
\newtheorem*{corollary*}{Corollary}
\newtheorem{definition}{Definition}
\newtheorem*{remark*}{Remark}

\newcommand{\ds}{\displaystyle}

\def\R{{\mathbb R}}

\def\const{\textrm{const}}
\def\k{{\kappa}}

\def\R~{\widetilde{R}}

\title{Caustics of light rays and Euler's angle of inclination}
\author{Sergiy Koshkin* and Ivan Rocha\\
\\
*Corresponding author\\
Department of Mathematics and Statistics\\
University of Houston-Downtown\\
One Main Street\\
Houston, TX 77002\\
e-mail: koshkins@uhd.edu}

\date{}

\begin{document}

\maketitle

\begin{abstract}
Euler used intrinsic equations expressing the radius of curvature as a function of the angle of inclination to find curves similar to their evolutes. We interpret the evolute of a plane curve optically, as the caustic (envelope) of light rays normal to it, and study the Euler's problem for general caustics. The resulting curves are characterized when the rays are at a constant angle to the curve, generalizing the case of evolutes. Aside from analogs of classical solutions we encounter some new types of curves. We also consider caustics of parallel rays reflected by a curved mirror, where Euler's problem leads to a novel pantograph equation, and describe its analytic solutions. 
\bigskip

\textbf{Keywords}:
Plane curve; evolute; intrinsic equations; angle of inclination; caustic; delay differential equation; pantograph equation
\bigskip

\textbf{MSC}: 53A04 78A05 34K06

\end{abstract}

\section*{Introduction}

Caustic, from Latin ``burn", is a curve or surface that is particularly brightly lit. In geometric optics this happens because all light rays are tangent to it, and hence the light concentrates near it. A special class of caustics,  caustics by reflection or catacaustics (from Greek {\it catoptron}, mirror), were introduced by Tschirnhaus in 1682 \cite{KK,SS}. For them the rays come from a point source reflected in a curved mirror. Determining catacaustics was an important test case for the early calculus methods. Jacob Bernoulli studied them around 1692, and L'Hopital's calculus textbook (1696) included two chapters on them \cite{Boy46}. 

As Bernoulli and l'Hopital already realized, evolutes of plane curves, the sets of their centers of curvature, can be interpreted as a special case of caustics, when the light rays are normal to the curve. They already appear implicitly in Apollonius's {\it Conica} (c.\,200 BC), but the concept and the term ``evolute" were introduced by Huygens in his {\it Horologium Oscillatorum} (1673), in the course of designing an ideal pendulum clock. Other classical examples of caustics include caustics by refraction and tractrices.

In this paper we investigate caustics with a special property that attracted much attention early on. Huygens knew that cycloids are congruent to their own evolutes, and his pendulum construction relied on that fact \cite{SS}. Bernoulli showed in 1692 that logarithmic spirals have the same property. Half a century later Euler devoted two papers to studying plane curves that are similar (homothetic) to their evolutes \cite{Eu50,Eu66}. Aside from logarithmic spirals and cycloids, their circular relatives, epicycloids and hypocycloids, turned out to have this property as well. Many prominent French mathematicians, Gergonne \cite{Gerg}, Poisson \cite{Pois}, Binet \cite{Bin}, Puiseux \cite{Puis}, weighed in on the problem of finding more such curves. Puiseux gave a more or less complete analytic solution in 1844, which entered Salmon's 1852 textbook \cite[\textsection\textsection\,302-305]{Salm}, but was largely forgotten afterwards.

Prior to his work on evolutes, Euler introduced what we now call natural equations, and effectively derived the Frenet-Serret (moving frame) equations for plane curves \cite{Eu41}. But that was not quite what finding curves with similar evolutes required. Using parametric, or even natural, equations leads to a non-linear problem. Euler discovered, and others rediscovered, how to linearize it -- instead of the arclength one has to use the curve's angle to a fixed line as the parameter, the {\it angle of inclination}. Even with this insight, the general problem leads to delay differential equations, in addition to ordinary ones, whose analytic solutions were first found by Puiseux.

In this paper we generalize the Euler's problem to caustics specified by a direction field along the curve ({\it conormal field}), and show that the use of the angle of inclination linearizes it as well. Two special cases are studied in more detail. First, caustics of light rays that are tilted to the normal direction of the curve by a constant angle, we call them {\it skew-evolutes} of the curve. And second, caustics by reflection of a pencil of parallel rays in a curved mirror. For the skew-evolutes the problem leads to ordinary and delay differential equations analogous to the classical ones for evolutes, but some new types of curves appear as solutions. 

For caustics by reflection we encounter a more complex type of functional differential equation even in the simplest cases, a {\it pantograph equation}. Such equations, even with constant coefficients, were only studied since 1970s \cite{Fox, M-PN}. Ours is singular with variable coefficients, and, to the best of our knowledge, there is no general theory  of such equations to date. We investigate its analytic solutions, and find that, with the exception of the classical cycloidal mirror \cite{McL}, the mirrors they represent are not physically feasible. They bend over themselves, and obstruct incoming rays from reaching their inner parts. 

The paper is organized as follows. In Preliminaries we introduce the standard notation and terminology from geometry of plane curves, with some modifications, and in Section \ref{Caustic} we derive the equation of the general caustic generated by a conormal field along a curve. In Section \ref{Skew} we specialize to families of rays that make a constant angle with the curve and their caustics, the skew-evolutes. Analytic curves similar to their skew-evolutes are surveyed more or less completely. In Section \ref{Mirrors} we introduce caustics by reflection of a parallel pencil of rays in a curved mirror, and discuss their geometric properties.  Section \ref{Panto} discusses solving the pantograph equation to which the Euler's problem reduces, and interpreting the solutions geometrically.

\section{Preliminaries}\label{Prelim}

Here we recall the standard notation and terminology of differential geometry \cite{Gibs,Str}, but with some modifications convenient for our purposes. The position vector of a plane curve is denoted $r$, and its natural parameter, specified up to a choice of initial point and direction, $s$. Dots will denote derivatives with respect to $s$. Then $\dot{r}(s)$ is the unit velocity vector to the curve. 

The curves we consider have smooth (infinitely differentiable) parametrizations, but will be allowed to have cusps, that is points where the velocity reverses direction. Such curves are often called fronts, or wavefronts \cite{FT}, but we will not use this terminology. 
\begin{definition} Let $T$ denote the unit tangent that varies smoothly along the curve except for reversing the direction at the cusps. The unit normal $N$ is obtained by rotating $T$ counterclockwise by $90^\circ$, and also reverses direction at the cusps. The curvature $\k$ is taken with a sign that changes at the cusps.
\end{definition}
This convention is different from the standard one, but it is natural for the parametrizations that we will use, and allows analytic expressions for curves we consider to remain valid across cusps. The vectors $T$ and $N$ form a {\it moving frame} along the curve, and $\dot{T}=\k N$ always holds. Together with the equation for $N$ we obtain the Frenet-Serret equations, which for plane curves were derived already by Euler \cite{Eu41}:
\begin{equation}\label{MovFram}
    \begin{cases}\dot{T}=\k N\\
    \dot{N}=-\k T.\end{cases}
\end{equation}
\begin{definition}\label{Evo} The reciprocal of the curvature $R=\frac1{\k}$ is called the {\bf (signed) radius of curvature}, and the point $c=r+RN$ is called the center of curvature. The {\bf evolute} of a curve is the locus of its centers of curvature. Quantities associated with evolutes (and later caustics) will be labeled by index $1$, and we adopt the convention that $ds_1$ and $dR$ have the same sign. \end{definition}
\noindent A natural equation of the curve $\k=\k(s)$, and its equivalent $R=R(s)$, sometimes called Ces\'aro equation \cite{Law}, are (almost) intrinsic, i.e. depend on its geometric shape rather than on parametrization or position in the plane, up to the ambiguity in the choice of natural parameter. Natural equations of many known plane curves can be found in \cite{Law} and \cite{Yat}.

A natural parameter of the curve may no longer be natural for its evolute. We compute
\begin{equation}\label{cdot}
\dot{c} = \frac{d}{ds}(r+RN)=\dot{r}+\dot{R}\,N+R\dot{N} = T + \dot{R}\,N-R\,\k\!\,T = \dot{R}\,N.
\end{equation}
Since $\dot{c}$ is the velocity of moving along $c$, and $N$ is a unit vector, equation \eqref{cdot} implies that $\frac{ds_1}{ds}=\dot{R}=\frac{dR}{ds}$, i.e. the change of the arclength along the evolute is equal to the corresponding change in the radius of curvature along the source curve. 

Euler's idea was to use instead of a natural parameter the angle $\theta$ of the curve with a fixed line, which we might as well identify with the $x$-axis \cite{Str}, Figure \ref{Inclin}(b).
\begin{definition} The angle $\theta$ between the tangent to a plane curve and the $x$-axis is called its angle of inclination at that point. The equation $R=R(\theta)$, where $R$ is the (signed) radius of curvature, will be called the {\bf inclination equation} of the curve.
\end{definition}
\noindent Similar equations were used by Whewell \cite{Whew}, but he worked with $s(\theta)$ rather than $R(\theta)$. 

Thinking of curving as an infinitesimal rotation we see that $\frac{ds}{R}=d\theta$ when $R$ is positive. To make this hold universally, we need to change the meaning of $s$ as the arclength since we allow negative $R$. 

\begin{definition} We redefine $s$ as the parameter satisfying $\frac{ds}{d\theta}=R(\theta)$, and continue to use dots for derivatives with respect to this $s$. 
\end{definition}
\noindent Our $s$ can increase as well as decrease with $\theta$ since $R$ can be negative, but $\theta$ will always strictly increase along a curve, and \eqref{MovFram} continue to hold. We also have $\frac{ds_1}{R_1}=d\theta$, since the change of the angle is the same along the curve and its evolute, so 
\begin{equation}\label{EvoRad}
R_1=R\frac{dR}{ds}=\frac{dR}{d\theta}.
\end{equation}
Now we can see why the angle of inclination might be a better parameter -- while the relation of $R_1(s)$ to $R(s)$ is non-linear, $R_1(\theta)$ is just the derivative of $R(\theta)$. From now on we will denote derivatives with respect to $\theta$ by $'$. Note that $R_1(\theta)$ is not an inclination equation of the evolute, rather it is $R_1(\theta_1)=R'(\theta_1-\frac{\pi}2)$, because for evolutes $\theta_1=\theta+\frac{\pi}2$. Still, in terms of inclination equations, passing to the evolute amounts to just taking the derivative and shifting the argument, a linear operation. 
\begin{figure}[!ht]
\vspace{-0.1in}
\begin{centering}
(a)\ \ \ \includegraphics[width=1.1in]{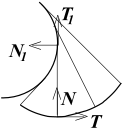} \hspace{1.0in} 
(b)\ \ \ \includegraphics[width=1.5in]{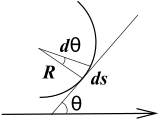}
\par\end{centering}
\vspace{-0.2in}
\hspace*{-0.1in}\caption{\label{Inclin} (a) Moving frames for a curve and its evolute; (b) angle of inclination, arclength and the radius of curvature for a plane curve.}
\end{figure}

Moreover, inclination equations are no less intrinsic than natural equations. The only extrinsic choices are the fixed line and the direction of measuring the angle, just as with a natural parameter we had to choose the starting point and the direction. From an inclination equation we can easily retrieve a natural parameter, $s=\int R(\theta)d\theta$, and even parametric equations of the curve \cite{Str}:
\begin{equation}\label{Inc-Par}
x(\theta)=\int R(\theta)\cos\theta\,d\theta;\ \ \ \ \ \ \
y(\theta)=\int R(\theta)\sin\theta\,d\theta\,.
\end{equation}
Note that under our sign conventions the unit tangent to the curve is always $T(\theta)=(\cos\theta,\sin\theta)$, and it reverses the direction relative to the direction of increasing $\theta$ whenever $R(\theta)$ changes sign at a cusp \cite[7.5]{Gibs}. The unit normal is $N(\theta)=(-\sin\theta,\cos\theta)$, and it also reverses direction at cusps.

\section{Conormal fields and their caustics}\label{Caustic}

To generalize Euler's observation, let us first interpret evolutes optically. Picture a windowless room with evenly spaced lasers mounted on the walls near the ceiling. Some places on the ceiling will be particularly brightly lit. Those are the places where multiple laser rays ``lump" together. They are typically clustered around a curve determined by the shape of the room, it is the curve that all the rays are tangent to. Such a curve is defined for any smooth one-parameter family of smooth curves and is known as its {\it envelope}. In the case of light rays, the envelope is called the {\it caustic}. 

To find the caustic, note that there are two ways of moving within a family of plane curves. We can move along a curve in the family, or we can move transversally to it, from one curve to another. Typically, these two directions of motion are distinct, but when moving along the envelope they merge into a single one, the direction of its tangent. If the lasers are normal to the wall shaped by a naturally parametrized curve $r(s)$, then the family of light rays is given by $r_{t}(s)=r(s)+t N(s)$. The two directions are $\frac{\partial r_{t}}{\partial s}=\dot{r_{t}}$ and $\frac{\partial r_{t}}{\partial t}=N$, respectively. On the envelope these two vectors must be collinear, i.e. their cross-product must equal to $0$. By the moving frame equations \eqref{MovFram},
\[\label{NormEnv}
\frac{\partial r_{t}}{\partial s}\times\frac{\partial r_{t}}{\partial t}=\dot{r_{t}}\times N=(\dot{r}+t \dot{N})\times N
=(1-t \k )\,T\times N\,.
\]
Since $T$ and $N$ are orthogonal unit vectors, this product is $0$ if and only if $t=\frac{1}{\k}$. Therefore, the caustic $c=r+\frac{1}{\k} N$ is exactly the evolute of $r$. 
\begin{figure}[!ht]
\vspace{-0.1in}
\begin{centering}
(a)\ \ \ \includegraphics[width=0.8in]{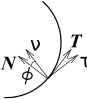} \hspace{0.8in} 
(b)\ \ \ \includegraphics[width=0.6in]{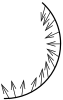} 
\par\end{centering}
\vspace{-0.2in}
\hspace*{-0.1in}\caption{\label{CoGeom} (a) Moving coframe tilted by angle $\phi$; (b) conormal field along a curve.}
\end{figure}

Now imagine that the lasers are not normal to the wall, but point at varying directions, as on Figure \ref{CoGeom}(b). The next definition introduces the corresponding generalization of evolutes.
\begin{definition}\label{coframe} Denote the unit vector along the direction of the ray $\nu$, and call the angle $\phi$ between $\nu$ and $N$ the {\bf tilting angle}, with the positive tilt being in the direction of $T$. Denote $\tau$ the unit vector perpendicular to $\nu$ such that $\tau, \nu$ form a right pair. The fields $\nu(s),\tau(s)$ are called {\bf conormal and cotangent fields}, respectively, and $\tau, \nu$ the moving coframe. The caustic generated by rays directed along a field $\nu$ is called the {\bf conormal caustic}.
\end{definition}
It turns out that there are straightforward analogs of Frenet-Serret equations \eqref{MovFram} for the moving coframe. Differentiating the relations $\tau \cdot \tau =1$, $\tau \cdot \nu=0$, and $\nu \cdot \nu = 1$, we obtain for some function $\chi(s)$:
\begin{equation}\label{Coframe}
    \begin{cases}\dot{\tau}=\chi\,\nu\\
    \dot{\nu}=-\chi\,\tau.\end{cases}
\end{equation}
Clearly, $\chi(s)$ plays a role analogous to curvature, but reflects not only curving of the curve itself but also variation in the tilting angle along it.
We can specify a conormal field by a field of tilting angles $\phi(s)$, or, more conveniently for our purposes, $\phi(\theta)$. Using the moving coframe we will generalize Euler's formula for the inclination equation of the caustic in these terms. Recall that dots stand for derivatives with respect to $s$ and primes for derivatives with respect to $\theta$.
\begin{theorem}\label{CaustInc} Let $R$ be the (signed) radius of curvature of a plane curve with rays at each point tilted at angle $\phi$ to its normal, and assume that $\phi'\neq1$ at any point. Then the angle of inclination of the caustic at the point of tangency with the ray is $\theta_1=\theta+\frac{\pi}2-\phi$, and its (signed) radius of curvature is
\begin{equation}\label{R1theta}
R_1=\frac{1}{(1-\phi')^2}\left(\left((1-2\phi')\,\sin\phi+\frac{\phi''}{1-\phi'}\,\cos\phi\right)R+\cos\phi\,R'\right).
\end{equation}
\end{theorem}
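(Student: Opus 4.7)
The plan is to compute the caustic directly from the envelope condition in the $\theta$-parametrization, read off its tangent direction (which gives $\theta_1$), and then extract $R_1$ from the magnitude of $dc/d\theta_1$.

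First I would write the coframe in the moving frame: rotating $N$ toward $T$ by $\phi$ gives $\nu = \sin\phi\,T + \cos\phi\,N$ and, by the right-pair condition, $\tau = \cos\phi\,T - \sin\phi\,N$. Since $T(\theta)=(\cos\theta,\sin\theta)$ yields $T'=N$ and $N'=-T$, one gets $r'(\theta)=R\,T$ and a short calculation produces $\nu'(\theta) = -(1-\phi')\,\tau$; so $1-\phi'$ plays in $\theta$ the role that $R\chi$ plays in $s$ in the coframe equations. Parametrizing the ray family by $r_t(\theta)=r(\theta)+t\,\nu(\theta)$ and rewriting $T=\sin\phi\,\nu+\cos\phi\,\tau$, the derivative is $\partial_\theta r_t = R\sin\phi\,\nu + \bigl(R\cos\phi - t(1-\phi')\bigr)\tau$. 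The envelope condition $\partial_\theta r_t \parallel \nu$ forces the $\tau$-component to vanish, yielding $t(\theta)=R\cos\phi/(1-\phi')$, which is well-defined by the hypothesis $\phi'\neq1$. Hence the caustic is $c(\theta)=r(\theta)+\frac{R\cos\phi}{1-\phi'}\,\nu(\theta)$.

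Next I would differentiate $c$ allowing $t$ to vary, obtaining $c'(\theta) = (R\sin\phi + t')\,\nu + \bigl(R\cos\phi-t(1-\phi')\bigr)\tau$; with the envelope value of $t$ the $\tau$-component again cancels and $c'(\theta) = (R\sin\phi+t')\,\nu$. Direct expansion gives $\nu = (\sin(\phi-\theta),\cos(\phi-\theta)) = \bigl(\cos(\theta+\tfrac{\pi}{2}-\phi),\sin(\theta+\tfrac{\pi}{2}-\phi)\bigr)$, so the caustic's unit tangent is $T_1=\nu$ and its angle of inclination is $\theta_1 = \theta + \tfrac{\pi}{2}-\phi$. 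Since $d\theta_1/d\theta = 1-\phi'$, and $R_1$ is by definition the signed $T_1$-coefficient of $dc/d\theta_1$, one obtains $R_1 = (R\sin\phi + t')/(1-\phi')$.

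Finally I would expand $t' = \frac{d}{d\theta}\!\bigl[R\cos\phi/(1-\phi')\bigr]$ by the quotient and product rules and collect terms. Clearing to common denominator $(1-\phi')^3$, the $R\sin\phi$ contributions combine via the identity $(1-\phi')^2-\phi'(1-\phi')=(1-\phi')(1-2\phi')$ into the coefficient $(1-2\phi')\sin\phi$ of $R$; the $R'$ term reduces to $R'\cos\phi(1-\phi')$ and the $\phi''$ term contributes $R\phi''\cos\phi$. Reassembling these three pieces and factoring out $1/(1-\phi')^2$ reproduces \eqref{R1theta}. The main obstacle is just this final algebraic bookkeeping; the geometric content is short, and the signs propagate automatically through cusps because the conventions of Section \ref{Prelim} make $R$ and hence $R\sin\phi+t'$ signed quantities, so the formula remains valid across sign changes of $R$.
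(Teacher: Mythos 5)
Your proposal is correct and follows essentially the same route as the paper: introduce the tilted coframe, locate the caustic point on each ray via the envelope/tangency condition to get $c=r+\frac{R\cos\phi}{1-\phi'}\,\nu$, observe that $c'$ is parallel to $\nu$ so that $\theta_1=\theta+\frac{\pi}{2}-\phi$, and then obtain $R_1$ as $\frac{ds_1}{d\theta_1}$ by dividing out $d\theta_1/d\theta=1-\phi'$. The only difference is cosmetic — you carry out the whole computation in the $\theta$-parametrization (never introducing the coframe curvature $\chi$ and making the envelope condition explicit where the paper invokes it by analogy with the normal case), which is a clean and equivalent way to organize the same calculation.
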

\begin{proof}
From Definition \ref{coframe} we have:
\begin{equation}\label{TiltFram}
    \begin{cases}\tau = \cos\phi\,T - \sin\phi N\\
    \nu = \sin\phi\,T+\cos\phi\,N.\end{cases}
\end{equation}
Differentiating the first equation, and taking into account \eqref{MovFram}, \eqref{Coframe} and \eqref{TiltFram}, we find:
\begin{equation}\label{TiltCurv}
\chi=\k - \dot{\phi}=\frac{1-\phi'}R.
\end{equation}
Using the moving coframe equations we can find parametric equation of the caustic and its velocity analogously to the normal case: \begin{align}\label{CoCaust}
c&=r+\frac{\cos\phi}{\chi}\,\nu\,;\\
\dot{c}&=\dot{r}-\frac{\dot{\chi}}{\chi^2}\,\cos\phi\,\nu-\frac{\dot{\phi}}{\chi}\,\sin\phi\,\nu+\frac{1}{\chi}\,\cos\phi\,\dot{\nu} =\Big(\big(1-\frac{\dot{\phi}}{\chi}\big)\,\sin\phi-\frac{\dot{\chi}}{\chi^2}\,\cos\phi\Big)\,\nu\,.\notag
\end{align}
The coefficient $\dot{c}\cdot\nu$ of $\nu$ gives the factor relating the natural parameters of the two curves: $ds_1=(\dot{c}\cdot\nu)ds$. As expected, conormal rays are tangent to the caustic. This implies that the angles of inclination along the curve and its caustic are related as $\theta_1=\theta+\frac{\pi}2-\phi$, Figure \ref{IncShift}(a). Since $ds=R\,d\theta$ and $ds_1=R_1\,d\theta_1$:  
\begin{equation}\label{R1Short}
R_1=\frac{ds_1}{d\theta_1}=\frac{(\dot{c}\cdot\nu)\,ds}{d\theta-d\phi}=\frac{(\dot{c}\cdot\nu)R\,d\theta}{d\theta-d\phi}=\frac{\dot{c}\cdot\nu}{1-\phi'}\,R\,.
\end{equation}
Combining \eqref{CoCaust} and \eqref{R1Short} we compute:
\begin{equation}\label{R1Gen}
R_1=\frac{1}{1-\phi'}\left(\left(1-\frac{\dot{\phi}}{\chi}\right)\,\sin\phi-\frac{\dot{\chi}}{\chi^2}\,\cos\phi\right)\,R
\end{equation}
The formula \eqref{R1theta} now follows from \eqref{TiltCurv} and \eqref{R1Gen}.
\end{proof}
\noindent If $\phi'=1$ at a point then the caustic flattens at the corresponding point, $\k_1=0$. If $\phi'=1$ over an interval then $\theta_1=\const$ on that interval, i.e. the rays are parallel to each other on it and the caustic stays ``at infinity". For the evolutes, $\phi=\phi'=\phi''=0$, so this reduces to the familiar $R_1=R'$. Moreover, as long as the tilting angle is known as a function of inclination, finding the caustic is always a linear problem.
\begin{figure}[!ht]
\vspace{-0.1in}
\begin{centering}
(a)\ \ \ \includegraphics[width=1.6in]{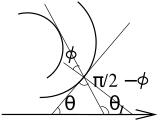}
\hspace{0.3in}
(b)\ \ \ \includegraphics[width=1.1in]{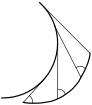}
\par\end{centering}
\vspace{-0.2in}
\hspace*{-0.1in}\caption{\label{IncShift} (a) angles of inclination along a curve and its caustic; (b) skew-evolute.}
\end{figure}
Theorem \ref{CaustInc} allows us to express similarity of a curve to its caustic as an equation for $R(\theta)$. One may be tempted to equate the right hand side of \eqref{R1theta} to $aR(\theta)$, where $a$ is the similarity factor, but that overlooks the change in the angle of inclination and the residual ambiguities in the inclination equations. The angle of inclination of the caustic is shifted by $\frac{\pi}2-\phi(\theta)$ relative to the angle of inclination of the curve. If we rotate the curve $\theta$ will shift by the angle of rotation, and if we change the direction of measuring it, it will switch sign. Thus, the general condition we want is $R_1\left(\theta_1\right)=aR(\pm(\theta_1-\beta))$, where $\theta_1$ is the caustic's angle of inclination. But \eqref{R1theta} gives us $R_1$ as a function of $\theta$, not $\theta_1$.
\begin{corollary}\label{SimCond}
Let $R=R(\theta)$ be the inclination equation of a plane curve with a conormal caustic defined by the field of tilting angles $\phi(\theta)$ such that $\phi'\neq1$. Then the curve and the caustic are similar if and only if $R$ satisfies the functional differential equation:
\begin{equation}\label{SimEq}
\frac{1}{(1-\phi')^2}\left(\left((1-2\phi')\,\sin\phi+\frac{\phi''}{1-\phi'}\,\cos\phi\right)R+\cos\phi\,R'\right)=aR\left(\pm(\theta+\frac{\pi}2-\phi-\beta)\right)
\end{equation}
for some real $a$ and $\beta$.
\end{corollary}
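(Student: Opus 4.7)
The plan is to translate the geometric notion of similarity into a condition on inclination equations, then substitute into the formula provided by Theorem \ref{CaustInc}. The proof itself should essentially be a one-line invocation of that theorem once the correct notion of similarity is in hand.

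First, I would record precisely what similarity of two plane curves means at the level of their inclination equations. A homothety of ratio $a$ scales every radius of curvature by $a$ and preserves the angle of inclination; a rigid rotation of the plane by angle $\beta$ shifts the angle of inclination by $\beta$; and reversing the direction in which we measure angles changes $\theta$ to $-\theta$. Together with the remark after Theorem \ref{CaustInc} and the discussion preceding the corollary, this yields the clean criterion: two plane curves with inclination equations $R$ and $\widetilde R$ are similar if and only if
\[
\widetilde R(\theta_1) = a\,R\bigl(\pm(\theta_1-\beta)\bigr)
\]
for some real $a$ and $\beta$.

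Next, I would apply this criterion with $\widetilde R = R_1$ the inclination equation of the caustic. Theorem \ref{CaustInc} tells us two things simultaneously: the angle of inclination of the caustic satisfies $\theta_1 = \theta + \tfrac{\pi}{2} - \phi$, and the value $R_1(\theta_1)$ is given by the explicit expression \eqref{R1theta} written in terms of the source parameter $\theta$. Substituting $\theta_1 = \theta + \tfrac{\pi}{2} - \phi$ into the argument on the right-hand side of the similarity criterion transforms it into $\pm(\theta + \tfrac{\pi}{2} - \phi - \beta)$, while the left-hand side becomes the quantity on the left of \eqref{SimEq}. Equating them gives \eqref{SimEq} directly, and the reverse implication is immediate since each step is an equality.

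The only subtlety — and, really, the only reason the statement is not just $R_1 = aR$ — is the appearance of the $\pm$ sign and the constant $\beta$. Both reflect the two extrinsic choices built into any inclination equation: the reference line and the orientation of angle measurement. Since these choices can be made independently for the source curve and its caustic, they must be absorbed into the similarity criterion; without them the condition would be too strong. Once this bookkeeping is made explicit, the corollary follows from Theorem \ref{CaustInc} by inspection, with no further computation required.
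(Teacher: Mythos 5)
Your proposal is correct and takes essentially the same route as the paper, whose own justification is exactly the discussion preceding the corollary: record the similarity criterion $R_1(\theta_1)=aR\left(\pm(\theta_1-\beta)\right)$ (with $a$ absorbing the scaling and $\pm$, $\beta$ the two extrinsic ambiguities of an inclination equation), then substitute $\theta_1=\theta+\frac{\pi}{2}-\phi$ and the expression \eqref{R1theta} from Theorem \ref{CaustInc}. No gaps.
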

\noindent For evolutes the similarity equation \eqref{SimEq} reduces to $R'(\theta)=aR(\pm(\theta-\alpha))$, where $\alpha=\beta-\frac\pi2$.

\section{Curves similar to their skew-evolutes}\label{Skew}

As we mentioned, one can think of evolutes as conormal caustics of families of rays with zero tilting angle at each point. The next simplest case is to consider families with a fixed angle $\phi$ other than zero, see Figure \ref{IncShift}(b).

\begin{definition}\label{Skew-ev} The {\bf skew-evolute} of a plane curve with tilting angle $\phi$ is the caustic of rays that make a constant angle $\frac\pi2-\phi$ with the curve.
\end{definition}
\noindent For skew-evolutes we still have $\phi'=\phi''=0$, so \eqref{SimEq} simplifies to
\begin{equation}\label{SkewSimi}
\cos\phi\,R'(\theta)+\sin\phi\,R(\theta)=aR(\pm(\theta-\alpha)),
\end{equation}
where $\alpha=\beta-\frac\pi2$.
For the purposes of solving this equation it is convenient to distinguish three types of cases \cite{Salm}. 
\begin{definition}\label{SkewCases}We say that a curve and its skew-evolute are similar {\bf point by point} when the right hand side in \eqref{SkewSimi} is just $aR(\theta)$, similar {\bf in inverse position} when it is $aR(\alpha-\theta)$, and similar {\bf with delay/advance} when it is $aR(\theta-\alpha)$.
\end{definition}
\noindent For the inverse position case the direction of motion along the skew-evolute is reversed compared to the direction on the curve. For the delay/advance case, if one thinks of $\theta$ as ``time" \eqref{SkewSimi} means that the skew-evolute's radius curvature at any ``moment" scales the curve's radius at some other ``time" $\theta-\alpha$. When $\alpha>0$ we have a delay, and when $\alpha<0$ we have an advance. An advance can be converted into a delay by switching from $\theta$ to $-\theta$ as the angle of inclination, and changing the  signs of $\phi$ and $a$, so it suffices to consider only the case $\alpha>0$. 

For (normal) evolutes all curves similar to them point by point and in inverse position were already found by Euler in \cite{Eu50} and \cite{Eu66}, and some were known to Huygens and Bernoulli even earlier. Finding them reduces to solving simple ordinary differential equations (ODE). But the delay case requires solving a different type of equation, a delay differential equation (DDE) \cite{Falb}. Unlike the ODE, whose solutions depend on a few free constants, general solutions to DDE contain infinitely many of them. Hence, this case produces the ``most" curves. A theory of DDE was only developed after Euler, and the curves similar to their evolutes with delay were characterized by Puiseux a century after him \cite{Puis}. Let us see what happens when we allow constant tilting angles that are non-zero.
\begin{figure}[!ht]
\vspace{-0.1in}
\begin{centering}
(a)\includegraphics[width=32mm,height=40mm]{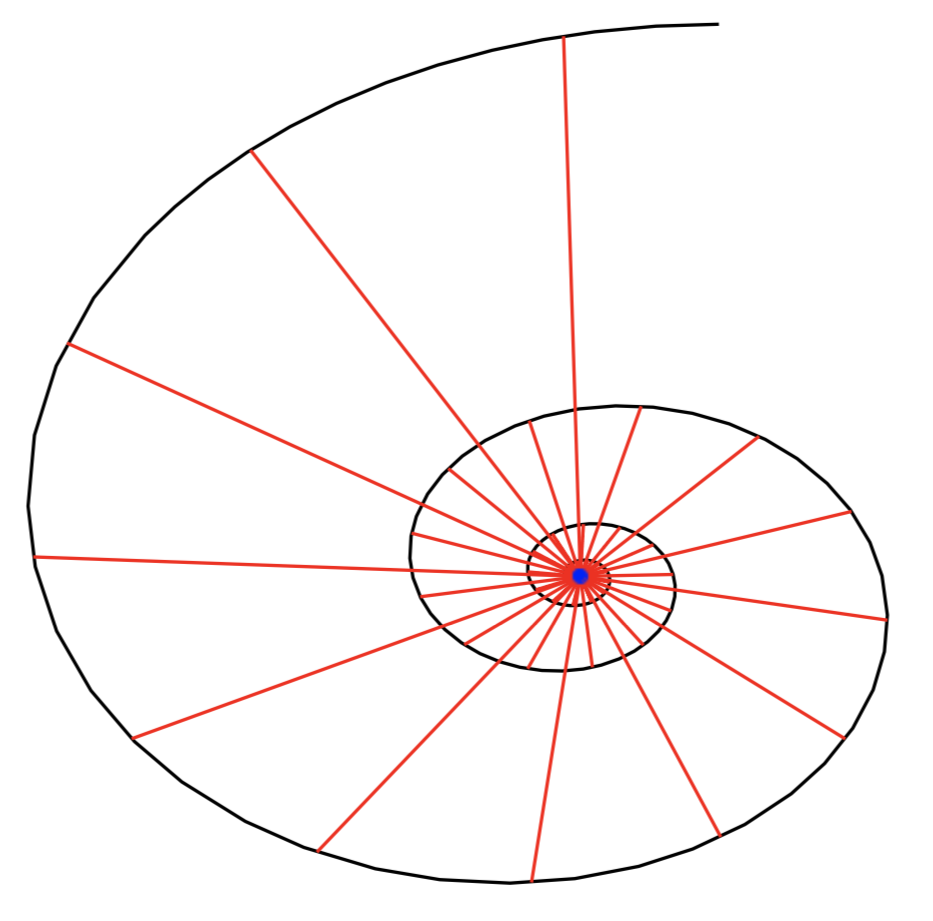} \hspace{0.2in} (b) \includegraphics[width=40mm,height=40mm]{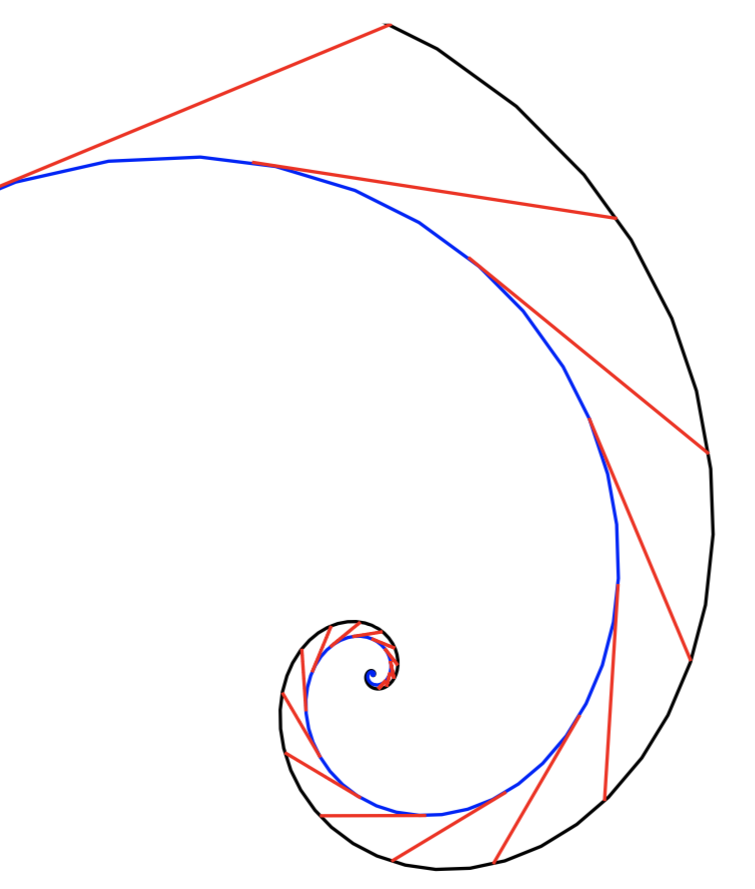} \hspace{0.2in} (c)\includegraphics[width=40mm,height=40mm]{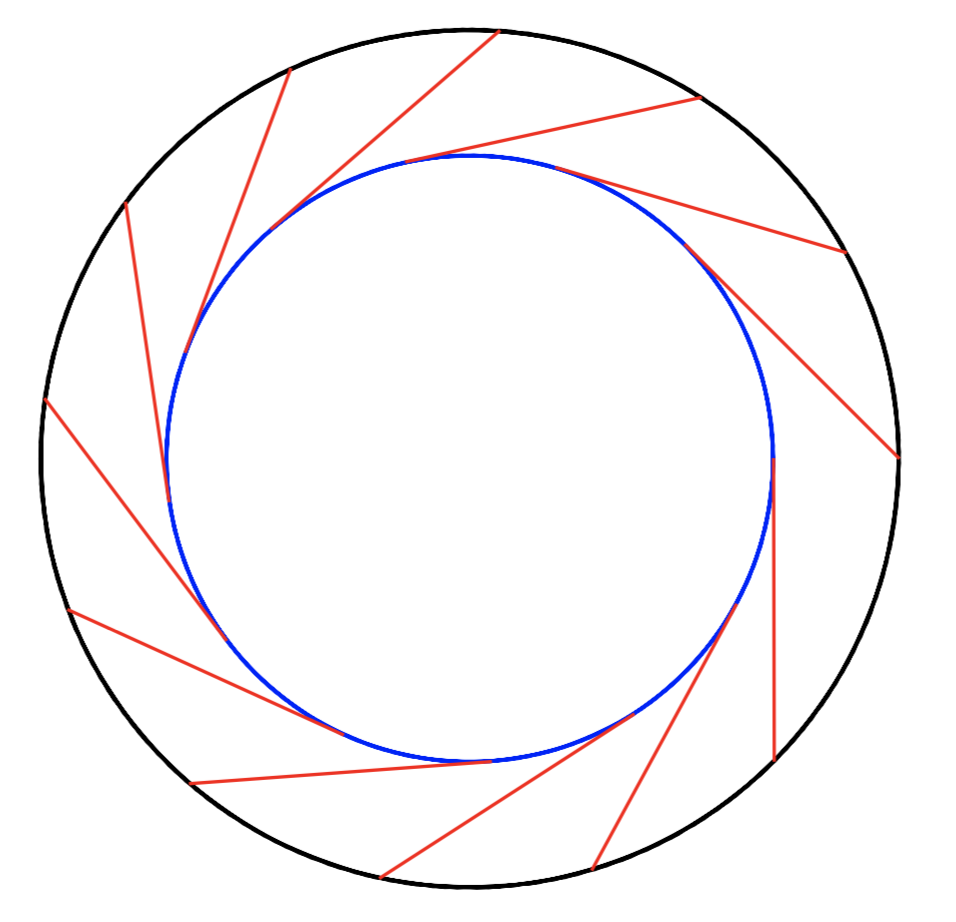}
\par\end{centering}
\vspace{-0.2in}
 \hspace*{-0.1in}\caption{\label{SkewPt} (a) Logarithmic spiral whose skew-evolute is a single point, $\phi=85^\circ$; (b) logarithmic spiral whose skew-evolute is a similar spiral, $\phi=45^\circ$; (c) skew-evolute of a circle is another circle.}
\end{figure}
\begin{figure}[!ht]
\vspace{-0.1in}
\begin{centering}
(a)\includegraphics[width=40mm]{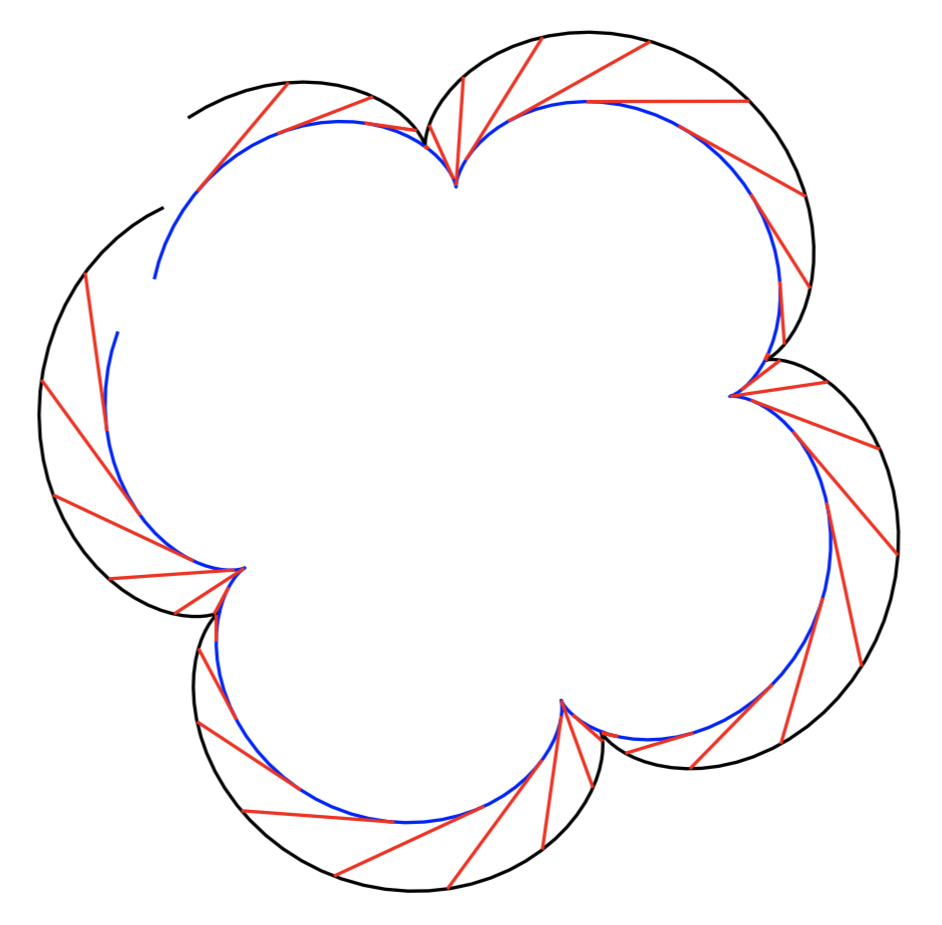} \hspace{0.2in} 
(b) \includegraphics[width=40mm]{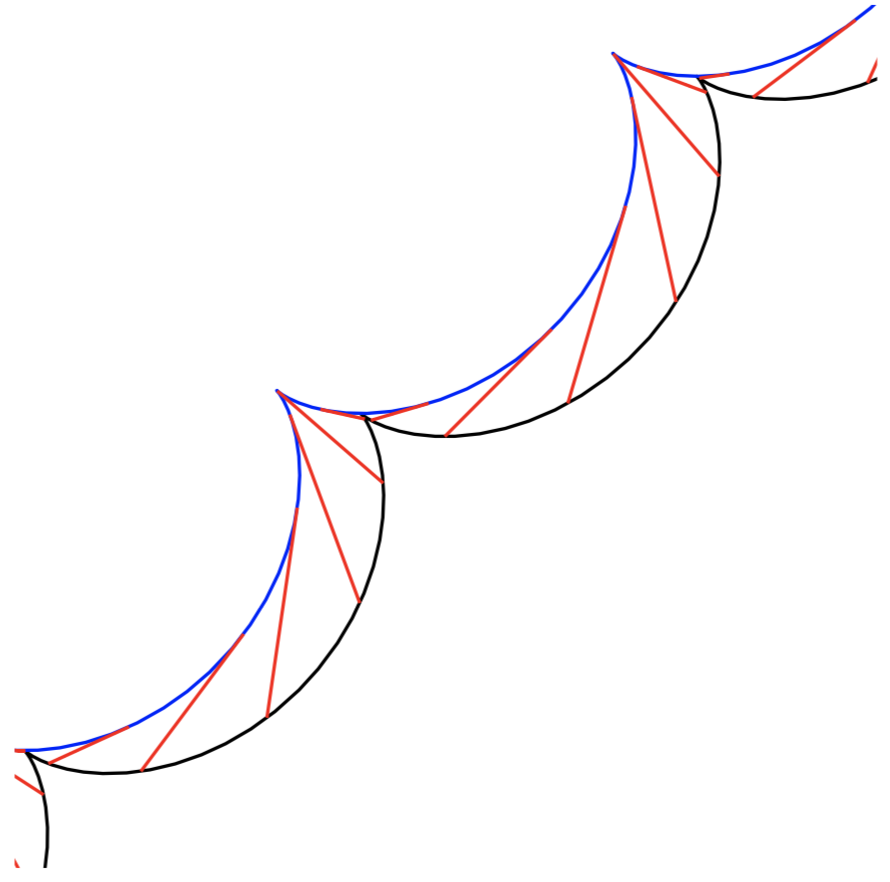} \hspace{0.2in} 
(c)\includegraphics[width=40mm]{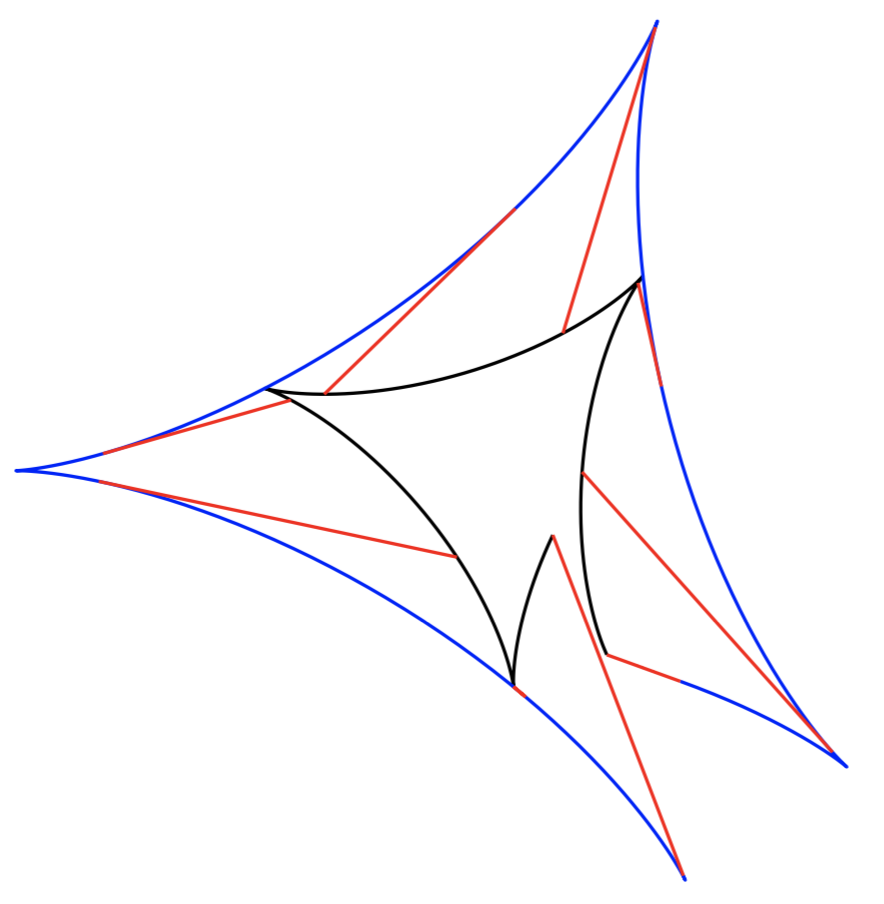}
\par\end{centering}
\vspace{-0.2in}
 \hspace*{-0.1in}\caption{\label{SkewCyc} Skew deformations of cycloidal curves with $\phi=\frac{\pi}{4}$: (a) epicycloid, $a=\frac{3}{4}$; (b) cycloid, $a=1$; (c) hypocycloid, $a=2$.}
\end{figure}

\begin{corollary}\label{Skew12} 
\textup{(i)} Plane curves similar to their evolutes point by point are logarithmic spirals with inclination equations $R(\theta)=Ae^{\frac{a-\sin\phi}{\cos\phi}\theta}$ that degenerate into circles when $a=\sin\phi$, see Figure \ref{SkewPt}.

\textup{(ii)} Plane curves similar to their evolutes in inverse position have inclination equations that solve the second order ODE $R''(\theta)+\left(\frac{a^2}{\cos^2\phi}-\tan^2\phi\right)R=0$. They are deformations of epicycloids, cycloids and hypocycloids, when the expression in parentheses is positive, circle's involutes when it is $0$, and have inclination equations $R(\theta)=A\,\cosh(b\theta)+B\,\sinh(b\theta)$ when it is negative, where $b^2=\tan^2\phi-\frac{a^2}{\cos^2\phi}$, see Figures \ref{SkewCyc},\ref{SkewHyp}.
\end{corollary}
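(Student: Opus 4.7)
The plan is to convert the functional differential equation \eqref{SkewSimi} into a pure ODE in each of the two cases isolated in Definition \ref{SkewCases}; only the delay/advance case genuinely requires DDE methods, so both parts here reduce to classical analysis.

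For part (i) the right-hand side of \eqref{SkewSimi} is $aR(\theta)$, so the equation is the first-order linear ODE
\[
    \cos\phi\,R'(\theta)+(\sin\phi-a)\,R(\theta)=0,
\]
with general solution $R(\theta)=A\exp\!\bigl(\tfrac{a-\sin\phi}{\cos\phi}\,\theta\bigr)$. Plugged into the parametric formulas \eqref{Inc-Par}, this produces a logarithmic spiral, since $\int R(\theta)\,(\cos\theta+i\sin\theta)\,d\theta$ collapses to a constant multiple of $e^{(c+i)\theta}$ with $c=\tfrac{a-\sin\phi}{\cos\phi}$, i.e.\ an exponentially rescaled rotation around the origin. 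When $a=\sin\phi$ the exponent vanishes, $R$ is constant, and the curve degenerates to a circle, in agreement with Figure \ref{SkewPt}.

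For part (ii) the equation reads $\cos\phi\,R'(\theta)+\sin\phi\,R(\theta)=aR(\alpha-\theta)$. Because the right-hand argument is a reflection of $\theta$ rather than a shift, I can eliminate the reflected unknown completely and reduce to a constant-coefficient ODE. The elimination proceeds in three moves: differentiate to obtain $\cos\phi\,R''(\theta)+\sin\phi\,R'(\theta)=-aR'(\alpha-\theta)$; substitute $\theta\mapsto\alpha-\theta$ in the original equation to express $R'(\alpha-\theta)$ through $R(\theta)$ and $R(\alpha-\theta)$; then apply the original equation once more to evict $R(\alpha-\theta)$ itself. After collecting terms the $R'$ contributions cancel, and the asserted ODE
\[
    R''(\theta)+\Bigl(\tfrac{a^2}{\cos^2\phi}-\tan^2\phi\Bigr)R(\theta)=0
\]
drops out. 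Setting $\lambda:=\tfrac{a^2}{\cos^2\phi}-\tan^2\phi$, the three listed families are the usual sign-cases of the harmonic oscillator: for $\lambda>0$ the trigonometric solutions, identified via the classical inclination equations of epicycloids, cycloids and hypocycloids (cf.\ \cite{Law,Yat}) with the skew deformations of Figure \ref{SkewCyc}; for $\lambda=0$ the linear solution $R=A+B\theta$, the known inclination equation of a circle involute; for $\lambda<0$ the hyperbolic combination stated, with $b^2=\tan^2\phi-\tfrac{a^2}{\cos^2\phi}$.

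I do not expect a serious obstacle beyond the bookkeeping of the elimination step. The one point that merits a quick check at the end is that differentiation enlarges the solution set, so substituting each of the three ODE families back into the original functional equation produces a pair of algebraic relations among $A$, $B$ and $\alpha$; these relations cut the two-parameter ODE solution down to the actual inclination equations of curves similar in inverse position to their skew-evolutes, and confirm that the qualitative classification stated in the corollary is complete.
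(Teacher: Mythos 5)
Your proof is correct and follows essentially the same route as the paper: part (i) is the same first-order linear ODE, and in part (ii) your differentiate-reflect-eliminate procedure is exactly the paper's reduction via the auxiliary function $Q(\theta)=R(\alpha-\theta)$ and the resulting first-order system, just carried out without naming $Q$. Your closing remark that back-substitution into the functional equation constrains the two integration constants (and fixes $\alpha$) matches the observation the paper makes immediately after its proof.
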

\begin{proof} The case (i) is straightforward by solving the point by point equation $\cos\phi\,R'(\theta)+\sin\phi\,R(\theta)=aR(\theta)$. For (ii) \eqref{SkewSimi}
reduces to $\cos\phi\,R'(\theta)+\sin\phi\,R(\theta)=aR(\alpha-\theta)$. We can solve it by using a general trick for solving functional differential equations with idempotent functions, such as $\theta\mapsto\alpha-\theta$, in the argument \cite{Falb}. Dividing by $\cos\phi$ transforms it into $R'(\theta)=-\tan\phi\,R(\theta)+\frac{a}{\cos\phi}R(\alpha-\theta)$. Replacing $\theta$ by $\alpha-\theta$ in it we have $R'(\alpha-\theta)=-\tan\phi\,R(\alpha-\theta)+\frac{a}{\cos\phi}R(\theta)$. Now setting $Q(\theta):=R(\alpha-\theta)$ we obtain a first order system of ODE for $R$ and $Q$:
$$
\begin{cases}
R'=-\tan\phi\,R+\frac{a}{\cos\phi}Q \\
Q'=\tan\phi\,Q-\frac{a}{\cos\phi}R.
\end{cases}
$$
Differentiating the first equation and eliminating $Q'$ using the second we reduce it to the claimed second order ODE. It is the equation of the harmonic oscillator and its hyperbolic analog, whose solutions are well-known.
\end{proof}
\begin{figure}[!ht]
\vspace{-0.1in}
\begin{centering}
(a)\includegraphics[width=40mm]{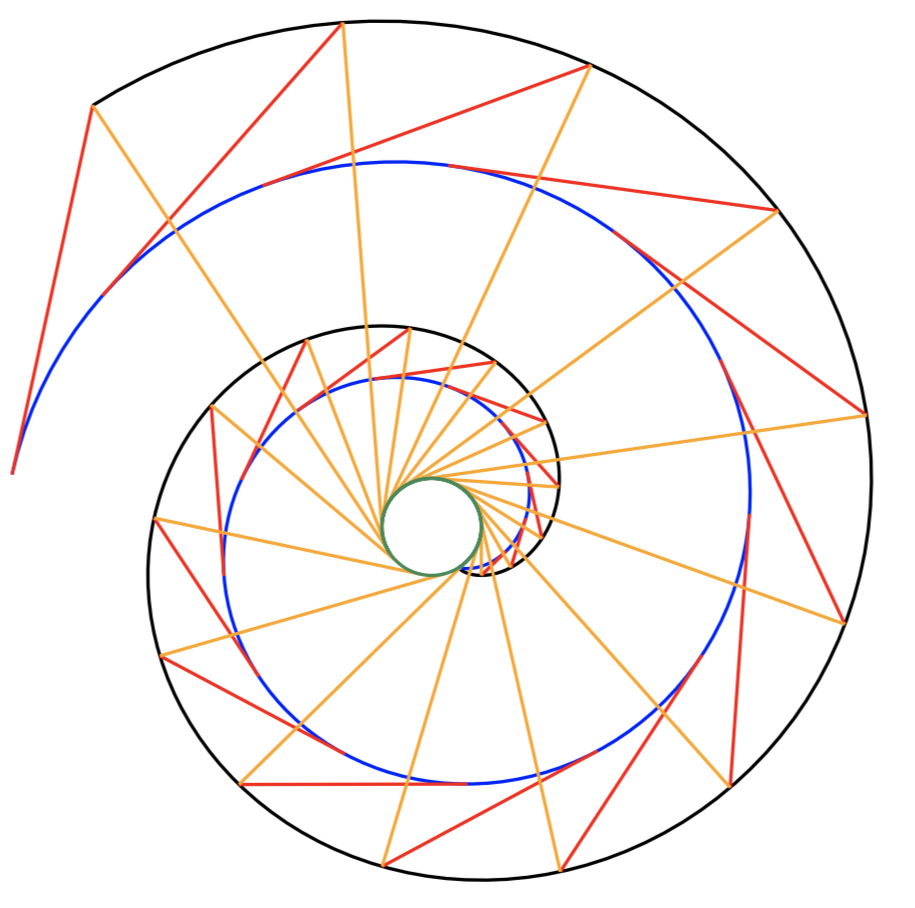}\hspace{0.2in} 
(b)\includegraphics[width=40mm]{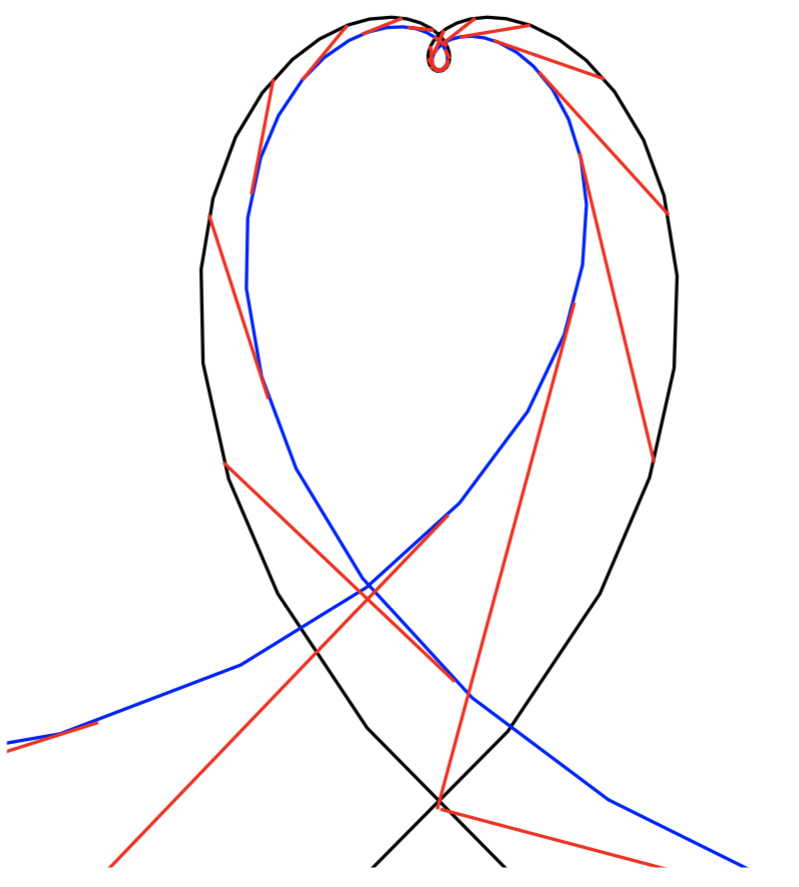} \hspace{0.2in} 
(c) \includegraphics[width=40mm]{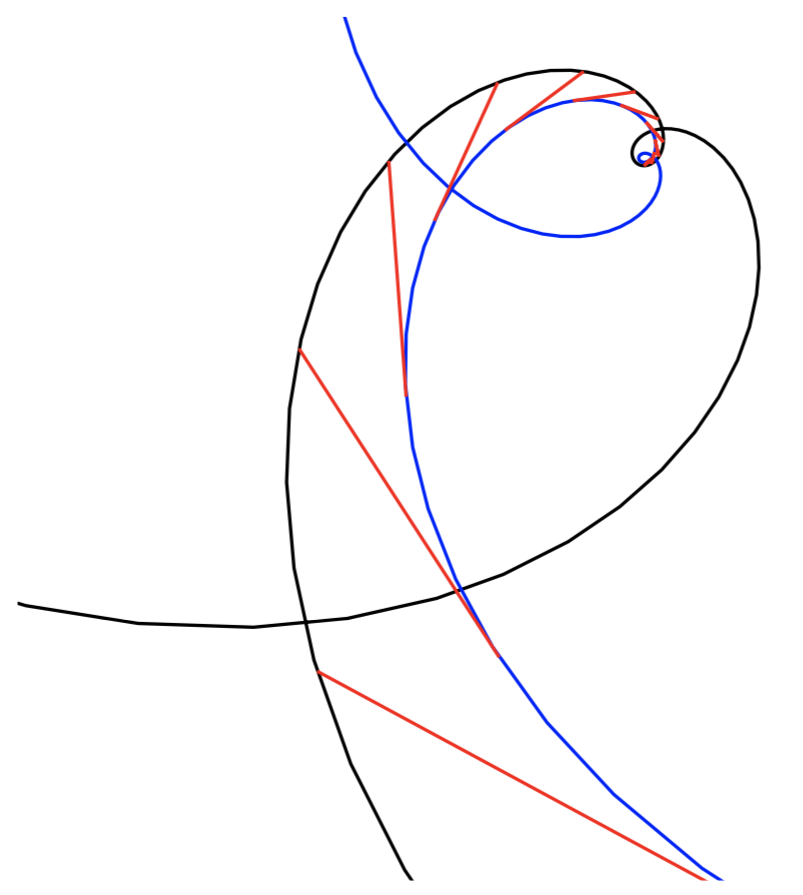} \hspace{0.2in}
\par\end{centering}
\vspace{-0.2in}
 \hspace*{-0.1in}\caption{\label{SkewHyp} (a) Skew-evolute of circle's involute; hyperbolic skew-cycloids with inclination equations: (b) $R(\theta)=\cosh(\theta)$; (c) $R(\theta)=2\cosh(\theta)-\sinh(\theta)$.}
\end{figure}
Note that the second order equations in (ii) have two constants of integration, one of which can be eliminated using that $Q(\theta)=R(\alpha-\theta)$. If both are chosen freely this choice will specify $\alpha$. Many of the resulting curves are direct analogs of classical curves similar to their (normal) evolutes, Figures \ref{SkewPt}, \ref{SkewCyc}. However, some new curves appear as well. When $a=\pm\sin\phi$ we get circle involutes (curves whose evolutes are circles), and when $a^2<\sin^2\phi$ we get unnamed curves with points of self-intersection, see Figure \ref{SkewHyp}.

The remaining equation  $R'(\theta)\cos\phi+R(\theta)\sin\phi=aR(\theta-\alpha)$ can not be reduced to an ODE by an analytic trick, but it is a linear delay differential equation (DDE) with constant coefficients. Some special solutions to it can be found explicitly by the method of characteristics, used already by Puiseux \cite{Puis} to find curves similar to their evolutes.

We begin by looking for solutions of the form $R(\theta)=e^{\lambda\theta}$. Substituting $R$ into the equation gives us the characteristic equation, which reduces to  $(\lambda+\tan\phi)\,e^{\alpha\,\lambda}=\frac{a}{\cos\phi}$. It is further reduced to a more standard form by making the substitution $z=\alpha(\lambda+\tan\phi)$, then
$$
ze^z=\frac{\alpha a}{\cos\phi}\,e^{\alpha\tan(\phi)}.
$$
This is known as the Lambert equation. Although it can not be solved in elementary functions, it is almost as well studied. The solutions are given by the {\it Lambert function} $W$, the inverse of $z e^{z}$, which is implemented in standard computer algebra systems \cite{CGHJK}. Like the complex logarithm $\textup{Ln}$, which is the inverse of $e^z$, it is multi-valued, and its branches $W_k$ are indexed by integers. Depending on the value of the right hand side this equation has two, one or no real solutions $\lambda_{0,1}$, and infinitely many complex conjugate ones. We can express the roots generally as
\begin{equation}\label{Lambk}
\lambda_k=\xi_k\pm i\eta_k=\frac1{\alpha}W_k\left(\frac{\alpha a}{\cos\phi}\,e^{\alpha\tan\phi}\right)-\tan\phi.
\end{equation}
Since our DDE is linear its general solution is the sum of particular ones for all possible $\lambda_k$. These sums are potentially infinite.
\begin{corollary}\label{Skew3} 
Plane curves similar to their evolutes with delay have inclination equations (formally) representable in the following form, where $\lambda_k$ are the solutions \eqref{Lambk} of the Lambert equation:
\begin{equation}\label{DelEvo}
    R(\theta)=A_0\,e^{\lambda_0\theta}+A_1\,e^{\lambda_1\theta}+\sum\limits_{k=2}^{\infty} e^{\xi_k\theta}(A_k\,\cos(\eta_k\theta)+B_k\,\sin(\eta_k\theta)).
\end{equation}
\end{corollary}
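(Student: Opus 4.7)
The proof is a method-of-characteristics argument for the linear delay differential equation
\[
\cos\phi\,R'(\theta)+\sin\phi\,R(\theta)=aR(\theta-\alpha).
\]

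My plan is to begin by searching for pure exponential solutions $R(\theta)=e^{\lambda\theta}$. Substituting into the DDE and cancelling $e^{\lambda\theta}$ yields the characteristic equation
\[
\lambda\cos\phi+\sin\phi=a\,e^{-\alpha\lambda},
\]
which, after dividing by $\cos\phi$ and multiplying both sides by $\alpha e^{\alpha\lambda}$, rearranges to
\[
\alpha(\lambda+\tan\phi)\,e^{\alpha(\lambda+\tan\phi)}=\frac{\alpha a}{\cos\phi}\,e^{\alpha\tan\phi}.
\]
Setting $z=\alpha(\lambda+\tan\phi)$ puts this into the canonical Lambert form $z\,e^z=\frac{\alpha a}{\cos\phi}\,e^{\alpha\tan\phi}$, so the roots are indexed by the branches of the Lambert $W$-function and are given precisely by \eqref{Lambk}. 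Since the right-hand side is real, non-real roots come in complex conjugate pairs $\lambda_k=\xi_k\pm i\eta_k$, while the real branches $W_0,W_{-1}$ (when they exist) contribute up to two real roots $\lambda_0,\lambda_1$.

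Next I would use linearity of the DDE to superpose these solutions. For each complex conjugate pair of roots $\xi_k\pm i\eta_k$, a real linear combination of $e^{(\xi_k+i\eta_k)\theta}$ and $e^{(\xi_k-i\eta_k)\theta}$ gives $e^{\xi_k\theta}(A_k\cos(\eta_k\theta)+B_k\sin(\eta_k\theta))$ with real constants $A_k,B_k$. Adding the contributions from the real roots $\lambda_0,\lambda_1$ (with coefficients $A_0,A_1$) yields the formal series \eqref{DelEvo}. Direct term-by-term substitution into the DDE confirms each summand satisfies it, and linearity gives the same for any finite truncation.

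The main subtlety, and the reason for the word \emph{formally} in the statement, is that the series in \eqref{DelEvo} need not converge, and even when it does the completeness (i.e. that every solution admits such an expansion) is non-trivial: for linear autonomous DDEs with a single delay the characteristic roots $\lambda_k$ accumulate at infinity in a logarithmic strip, and a completeness theorem for exponential solutions in the appropriate function space is needed. I would not attempt a full functional-analytic justification here; rather I would explicitly exhibit \eqref{DelEvo} as the family of exponential-type solutions parametrized by the characteristic roots, note that all classical examples (those studied by Puiseux) fall in this family, and refer to the standard theory of linear DDE for the density statement. This matches the spirit of Puiseux's original treatment in \cite{Puis}, where the same formal series was obtained by exactly this characteristic-root method.
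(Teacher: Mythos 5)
Your proposal is correct and follows essentially the same route as the paper: the paper also substitutes $R(\theta)=e^{\lambda\theta}$ into the delay equation, reduces the characteristic equation $(\lambda+\tan\phi)e^{\alpha\lambda}=\frac{a}{\cos\phi}$ to Lambert form via $z=\alpha(\lambda+\tan\phi)$, indexes the roots by branches of $W$, and superposes by linearity, treating the resulting series as formal. Your added remarks on conjugate pairing and the completeness/convergence caveat go slightly beyond the paper's brief comment that the coefficients must be ``selected carefully to ensure convergence,'' but the argument is the same.
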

\begin{figure}[!ht]
\vspace{-0.1in}
\begin{centering}
(a)\includegraphics[width=70mm,height=36mm]{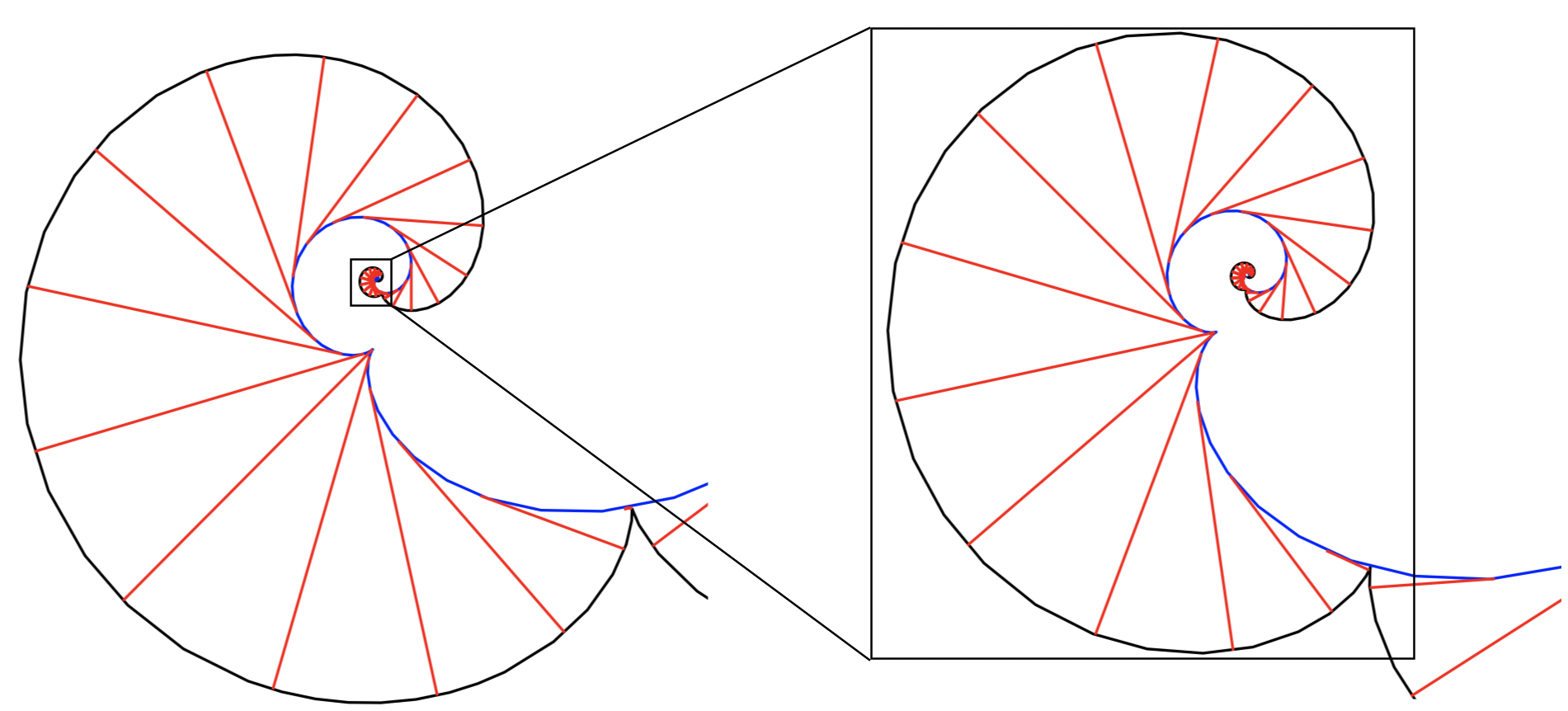}
\hspace{0.1in}
(b)\includegraphics[width=28mm,height=36mm]{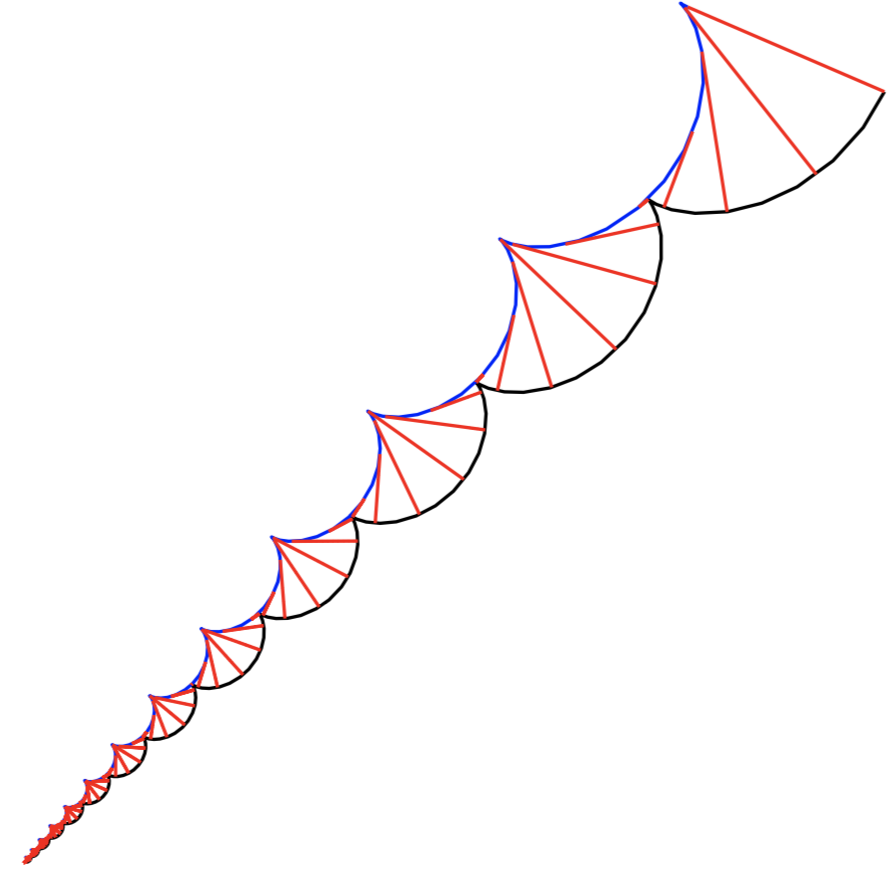}
\hspace{0.1in}
(c)\includegraphics[width=32mm, height=36mm]{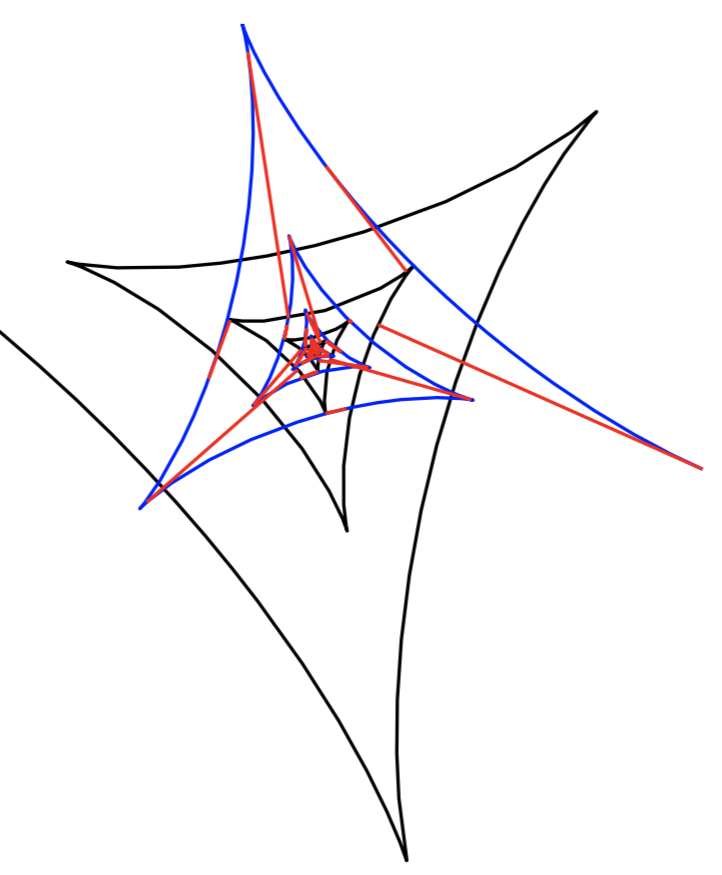}
\par\end{centering}
\vspace{-0.2in}
 \hspace*{-0.1in}\caption{\label{CuspSpir} (a) Puiseux epicycloidal spiral, the inset shows self-similarity; (b) Puiseux cycloid; (c) Puiseux hypocycloidal spiral.}
\end{figure}
\begin{figure}[!ht]
\vspace{-0.1in}
\begin{centering}
(a) \includegraphics[width=33mm]{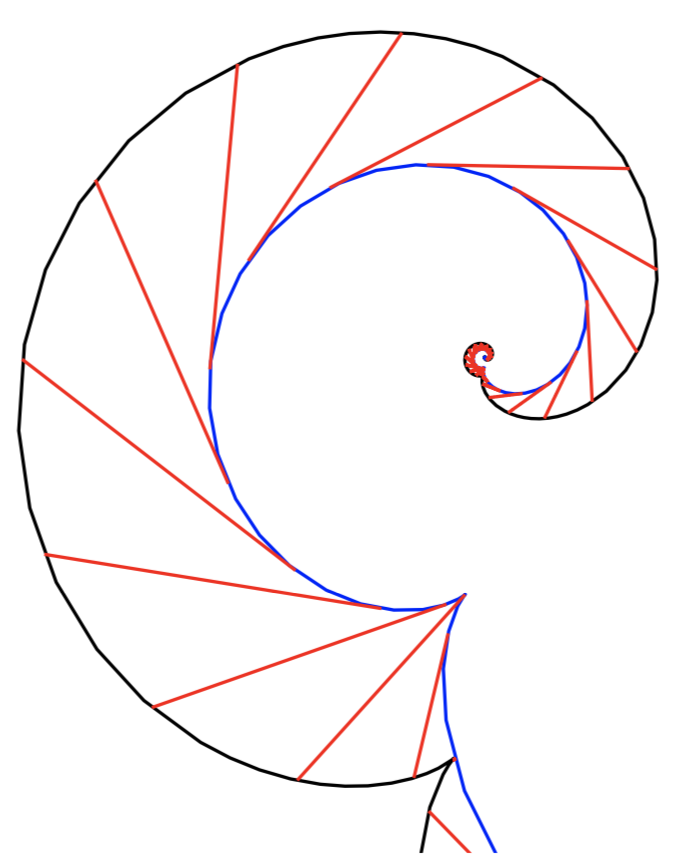}
\hspace{0.5in}
(b) \includegraphics[width=33mm]{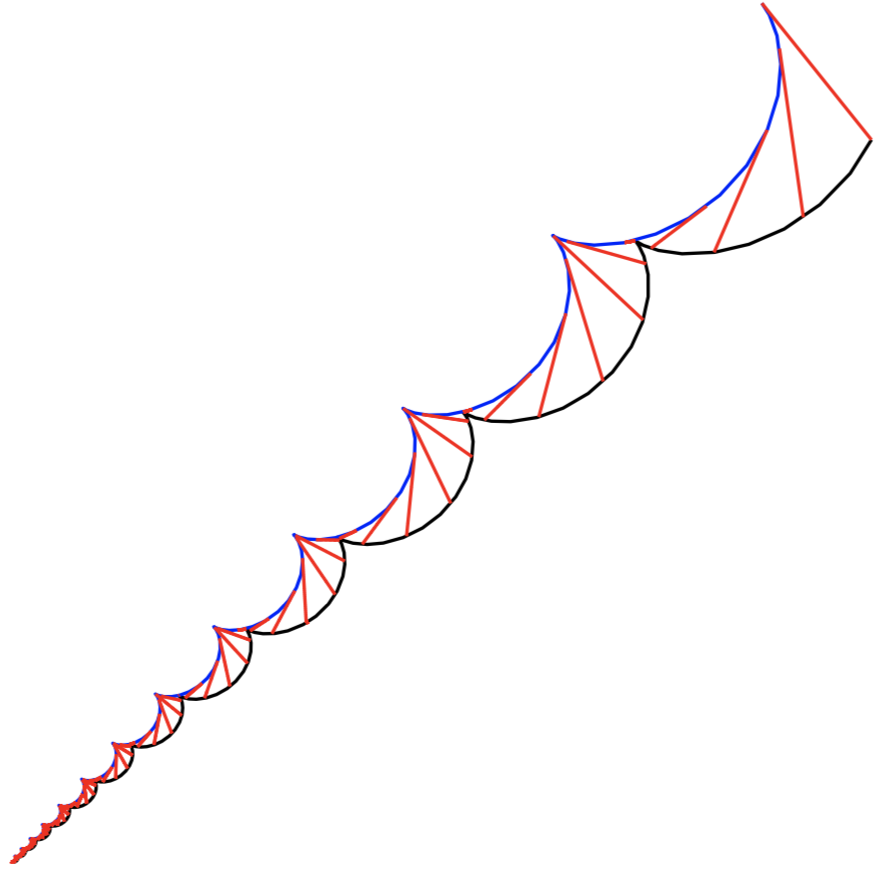}
\hspace{0.5in}
(c) \includegraphics[width=33mm]{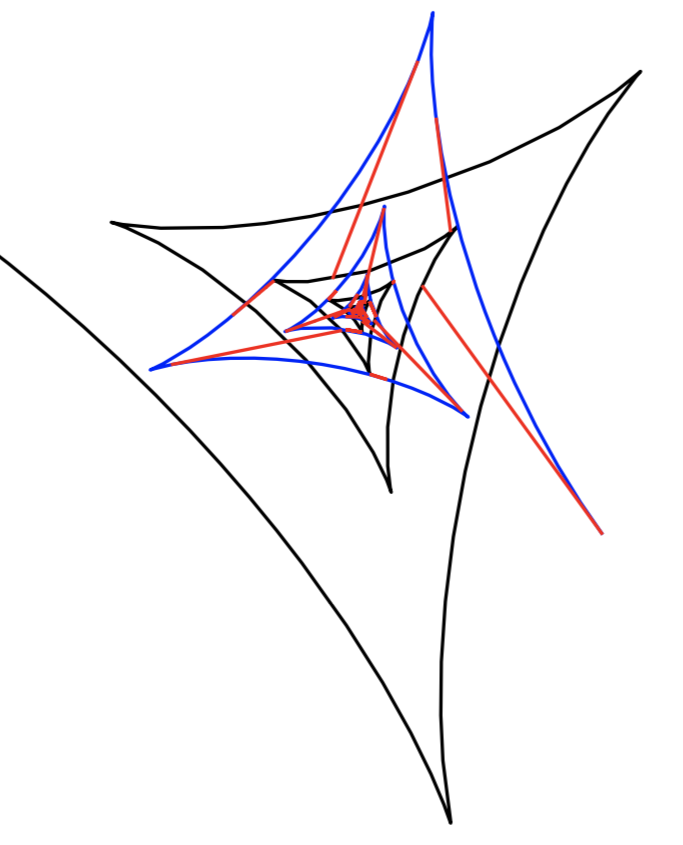}
\par\end{centering}
\vspace{-0.2in}
 \hspace*{-0.1in}\caption{\label{SkewPuis} (a)-(c) Puiseux skew-cycloids and skew-cycloidal spirals, $\phi = \frac{\pi}{6}$.}
\end{figure}
\noindent One has to select the coefficients $A_k$ and $B_k$ carefully to ensure convergence, but finite sums already give an infinite dimensional family of curves. It is impossible to survey them all, but even keeping a single term with complex $\lambda_k$ gives some interesting examples. They are a cross of logarithmic spirals and cycloidal curves. Puiseux, who first found them in the evolute case, described a curve with the inclination equation $R(\theta)=e^{c\theta}\sin(\gamma\theta)$ as follows \cite{Puis}:
{\small \begin{quote} It presents, like the epicycloid, a sequence of cusp points corresponding to the values of $\theta$ which grow in degrees equal to $\pi/\gamma$; but these points, instead of being situated on the same circle, are on a logarithmic spiral, and the tangent at each of them, instead of being confined to the radius vector, makes a constant angle with it. Moreover, the radius vector at each point attains its maximum or minimum, according to whether $\sqrt{c^2+\gamma^2}$ is greater or smaller than $1$.
\end{quote} }
\noindent Puiseux does not say what happens when $\sqrt{c^2+\gamma^2}=1$, see Figure \ref{CuspSpir}(b). This curve resembles a cycloid, with cusps situated on a line, but with increasing arc sizes. As one can see from Figure \ref{CuspSpir}, like logarithmic spirals, Puiseux's cuspidal spirals are self-similar -- the same pattern is reproduced when scaling them. This property holds also for their skew versions, see Figure \ref{SkewPuis}.

\section{Mirrors and caustics by reflection}\label{Mirrors}

After conormal caustics with constant tilting angles the next ones in complexity have tilting angles linear in $\theta$. Caustics by reflection of parallel rays provide one such example. Tschirnhaus and Bernoulli considered caustics of rays coming from a point source of light reflected in a curved mirror. Their shape depends on the relative positions of the source and the mirror, in addition to the shape of the mirror. We will move the source to infinity along a fixed direction to have them depend on the direction and the shape of the mirror only. As a result, the incoming rays will become parallel to this direction, and the reflected rays will all be tangent to the caustic by reflection that we consider \cite[12.3]{Gibs}. For definiteness, we will have the rays incoming along the positive direction of the $x$-axis, and place the mirror ``vertically", see Figure \ref{Mirror}(b). The next definition describes optically feasible mirrors, which do not obstruct their parts from getting hit by incoming rays.
\begin{definition}\label{VertMir} We call a plane curve {\bf vertical} if it is the graph of a function $x=f(y)$. Its {\bf caustic by reflection} is the envelope of the family of rays that are reflections of horizontal rays by it.
\end{definition}
\noindent Thinking of caustics by reflection as conormal caustics, in the notation of Section \ref{Caustic} we have a variable tilting angle, $\phi=\frac{\pi}2-\theta$, see Figure \ref{Mirror}(a). Therefore, $\sin\phi=\cos\theta$, $\cos\phi=\sin\theta$, $\phi'=-1$, and $\phi''=0$, so formula \eqref{R1Gen} simplifies to
\begin{equation}\label{R1Ref}
R_1=\frac14(3\cos\theta\,R(\theta)+\sin\theta\,R'(\theta))\,.
\end{equation}
\begin{figure}[!ht]
\vspace{-0.1in}
\begin{centering}
(a) \includegraphics[width=1.75in]{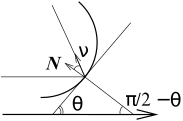}\hspace{0.8in} 
(b) \includegraphics[width=1.5in]{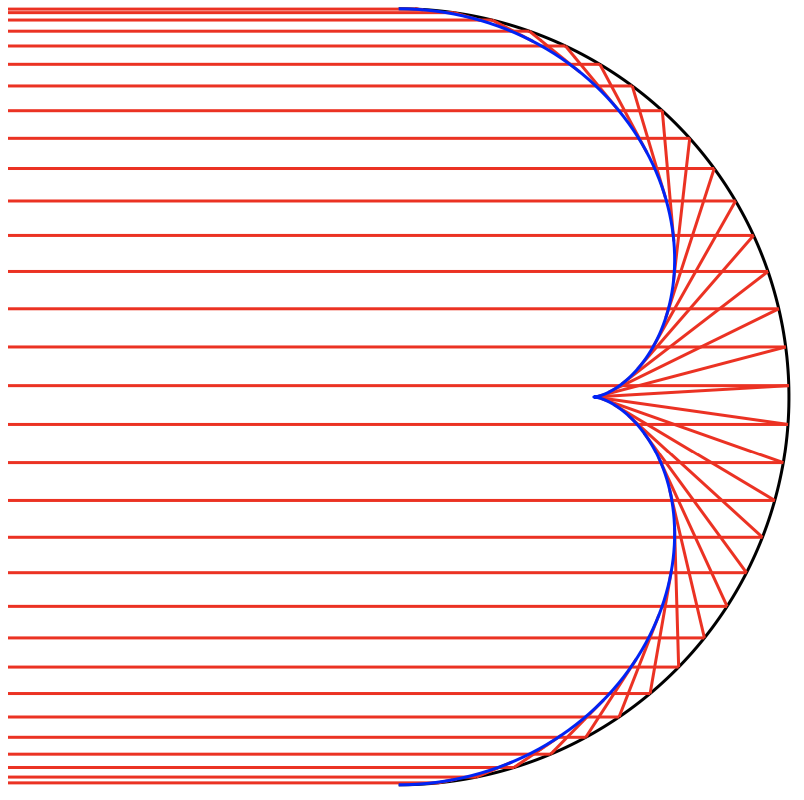}
\par\end{centering}
\vspace{-0.2in}
\hspace*{-0.1in}\caption{\label{Mirror} (a) Conormal vector and the tilting angle of a reflected horizontal ray; (b) reflection diagram and caustic by reflection of a semi-circle (half of nephroid).}
\end{figure}
As a warm-up example, let us apply it to a circular mirror of unit radius, i.e. $R(\theta)=1$. Then $R_1=\frac34\cos\theta$. Of course, only the semicircle $0\leq\theta\leq\pi$ forms a vertical mirror, other parts of the circle would block the light rays coming from the left. In contrast to evolutes and skew-evolutes, the angle of inclination of the caustic is not just shifted by a constant from the angle of inclination of the mirror curve, instead $\theta_1=\theta+\frac{\pi}2-\phi=2\theta$. Therefore, the inclination equation of the caustic is $R_1(\theta_1)=\frac34\cos\frac{\theta_1}2$. This curve is easily recognized as an epicycloid. This particular one is called nephroid (from Greek {\it nephros}, kidney), and it is roughly the lit curve one often sees on the surface of a coffee cup, sometimes called the coffee cup caustic, Figure \ref{Mirror}(b). The reason for ``roughly" is that the light source is usually not the Sun but a light bulb, which is not ``at infinity". We can see that infinite curvature corresponds to a cusp on the caustic at $\theta_1=\pi$.

There is also a simple relation between parametric equations of vertical mirrors and their caustics.
\begin{corollary}\label{MirCaust} 
Let $r(\theta)$ be the radius vector of a vertical mirror and $c(\theta)$ of its caustic. Then
\begin{align}\label{ParCaust}
c(\theta)=r(\theta)+\frac12R(\theta)\sin\theta\,(\cos2\theta,\sin2\theta).
\end{align}
In particular, the length of the ray segment between the mirror and the caustic is $\frac12|R(\theta)\sin\theta|$.
\end{corollary}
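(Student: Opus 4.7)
The plan is to specialize the general conormal caustic formula \eqref{CoCaust} to the reflection case, where $\phi=\tfrac{\pi}{2}-\theta$ has already been identified in the paragraph preceding \eqref{R1Ref}. Since the geometry has been translated into coframe language, no new envelope computation is needed; everything reduces to evaluating $c=r+\frac{\cos\phi}{\chi}\nu$ with the appropriate $\phi$.

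First I would record the elementary trigonometric consequences of $\phi=\tfrac{\pi}{2}-\theta$: $\cos\phi=\sin\theta$, $\sin\phi=\cos\theta$, and $\phi'=-1$. Substituting $\phi'=-1$ into the formula \eqref{TiltCurv} for the coframe curvature gives $\chi=\frac{1-\phi'}{R}=\frac{2}{R}$, so the scalar coefficient in \eqref{CoCaust} becomes $\frac{\cos\phi}{\chi}=\frac{1}{2}R(\theta)\sin\theta$. The next step is to rewrite the conormal direction $\nu$ in Cartesian coordinates. Using \eqref{TiltFram} together with the fixed expressions $T(\theta)=(\cos\theta,\sin\theta)$ and $N(\theta)=(-\sin\theta,\cos\theta)$ recorded at the end of Section \ref{Prelim}, I compute
\begin{equation*}
\nu=\sin\phi\,T+\cos\phi\,N=\cos\theta(\cos\theta,\sin\theta)+\sin\theta(-\sin\theta,\cos\theta)=(\cos 2\theta,\sin 2\theta),
\end{equation*}
by the standard double-angle identities. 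Assembling these two ingredients into \eqref{CoCaust} yields precisely the claimed parametric formula \eqref{ParCaust}.

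For the second assertion, since $(\cos 2\theta,\sin 2\theta)$ is a unit vector, the displacement $c(\theta)-r(\theta)$ has Euclidean length equal to the absolute value of its scalar coefficient, namely $\frac{1}{2}|R(\theta)\sin\theta|$. This displacement lies along the reflected ray because, by Definition \ref{coframe}, $\nu$ is by construction the direction of the ray at the point $r(\theta)$, so its length is indeed the distance from the mirror to the point of tangency with the caustic.

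Honestly, there is no real obstacle here: the work was done in Theorem \ref{CaustInc} and its proof, and the present corollary is just the specialization $\phi=\tfrac{\pi}{2}-\theta$ together with one application of the double-angle formulas. The only point that deserves a brief remark is why the combination $\sin\phi\,T+\cos\phi\,N$ collapses so cleanly into $(\cos 2\theta,\sin 2\theta)$; geometrically this reflects the familiar optical fact that reflecting the horizontal incoming direction across the normal at inclination $\theta$ produces a ray at angle $2\theta$ to the horizontal, which is exactly the content of the double-angle computation above.
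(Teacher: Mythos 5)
Your proposal is correct and follows essentially the same route as the paper: both specialize $c=r+\frac{\cos\phi}{\chi}\,\nu$ and $\chi=\frac{1-\phi'}{R}$ from the proof of Theorem \ref{CaustInc} to $\phi=\frac{\pi}{2}-\theta$, and reduce $\nu$ to $(\cos2\theta,\sin2\theta)$ by elementary trigonometry (the paper passes through $(\sin(\phi-\theta),\cos(\phi-\theta))$ while you substitute first and use double-angle identities, an immaterial difference). The length claim via the unit-vector observation matches the paper's appeal to the distance formula.
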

\begin{proof} Since $\phi=\frac{\pi}2-\theta$ we have by trigonometry
\begin{align*}
\nu = \sin\phi\,T+\cos\phi\,N=\sin\phi\,(\cos\theta,\sin\theta)+\cos\phi\,(-\sin\theta,\cos\theta)\\
=(\sin(\phi-\theta),\cos(\phi-\theta))=(\cos2\theta,\sin2\theta).
\end{align*}
The desired formula follows directly from formulas  $c=r+\frac{\cos\phi}{\chi}\,\nu$ and $\chi=\k - \dot{\phi}=\frac{1-\phi'}R$ from the proof of Theorem \ref{CaustInc}. The claim about the length follows from the distance formula.
\end{proof} 
It will be convenient for us in many cases to extend mirror curves beyond the angles of inclination $0\leq\theta\leq\pi$, to all $\theta\geq0$. The next corollary gives us simple facts about the shape of vertical mirrors and their caustics.
\begin{corollary}\label{CuspLoc} Vertical mirrors without inflection points consist of concave or convex arcs separated by horizontal cusps. If their caustics by reflection are also vertical they have horizontal cusps at the same points as the cusps of the mirror, and at the points corresponding to the points of mirror's vertical tangency. The latter are on the same horizontal segment, removed from them by half the radius of mirror's curvature there. 
\end{corollary}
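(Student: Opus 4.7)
My plan is to deduce all four clauses from a single monotonicity principle applied first to the mirror and then to the caustic, combined with the parametric formulas \eqref{Inc-Par}, the caustic formula \eqref{R1Ref}, and the explicit displacement in Corollary \ref{MirCaust}.

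The principle is this: a curve with inclination equation $\rho(\theta)$ is a graph $x=f(y)$ exactly when its $y$-coordinate $\int\rho(\theta)\sin\theta\,d\theta$ is monotone in $\theta$, i.e.\ $\rho(\theta)\sin\theta$ keeps one sign throughout. Under the paper's sign convention and the absence of inflection points, $\rho$ can change sign only at cusps (where $\rho = 0$), while $\sin\theta$ changes sign exactly at $\theta = k\pi$. For the product to retain its sign the two transitions must coincide, so every cusp of such a curve occurs at a horizontal tangent, and between consecutive cusps $\rho$ keeps one sign. Applied to the mirror, this immediately gives the first clause: horizontal cusps separating convex or concave arcs.

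Applied to the caustic, which is vertical by hypothesis and has inclination $\theta_1 = 2\theta$, the principle forces the caustic's cusps to lie at the values $\theta_1 = m\pi$, i.e.\ $\theta = m\pi/2$. These split into the even values $\theta = k\pi$, which are precisely the mirror cusps just identified, and the odd values $\theta = \pi/2 + k\pi$, which are precisely the mirror's points of vertical tangency (the tangent $(\cos\theta,\sin\theta)$ is then $(0,\pm1)$). I would then verify the former family directly using \eqref{R1Ref}: at $\theta = k\pi$ both $\sin\theta$ and $R$ vanish so $R_1 = 0$, and a local expansion $R(\theta)\sim R'(k\pi)(\theta-k\pi)$ substituted into \eqref{R1Ref} yields $R_1(\theta)\sim R'(k\pi)(\theta-k\pi)$, confirming the sign change expected at a cusp of the caustic.

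For the locations in the plane, Corollary \ref{MirCaust} gives $c - r = \tfrac12 R\sin\theta\,(\cos2\theta,\sin2\theta)$. At $\theta = k\pi$ both $R$ and $\sin\theta$ vanish, so $c = r$ there and the caustic cusp coincides with the mirror cusp in the plane. At $\theta = \pi/2 + k\pi$ the same displacement evaluates to $\mp\tfrac12 R\,(1,0)$, a purely horizontal vector of length $|R|/2$, which settles the final claim. The subtlety I anticipate is justifying the monotonicity step for the caustic: one must exclude the alternative that $R_1$ changes sign through $\pm\infty$ (a caustic inflection) instead of through $0$. I would handle this by reading the hypothesis ``the caustic is vertical'' in the same no-inflection sense as for the mirror, which is the only reading consistent with the paper's notion of graph-like curves; with that clarification, the rest of the argument is bookkeeping with formulas already established.
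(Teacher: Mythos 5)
Your proposal is correct, and for the first and third clauses it coincides with the paper's proof: the same monotonicity criterion ($R\sin\theta$ keeping one sign in \eqref{Inc-Par}) gives the decomposition of the mirror into convex/concave arcs with horizontal cusps at $\theta=k\pi$, and the same evaluation of the displacement $c-r=\tfrac12 R\sin\theta\,(\cos2\theta,\sin2\theta)$ from Corollary \ref{MirCaust} at $\theta=k\pi$ and $\theta=\tfrac\pi2+k\pi$ settles the locations. Where you genuinely diverge is the middle clause. The paper argues optically: rays reflected at horizontal or vertical points of the mirror stay horizontal, the caustic is tangent to them, and nearby rays are reflected to opposite sides of that tangent, so the caustic's tangent reverses and a cusp forms. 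You instead re-apply the verticality criterion to the caustic itself, using $\theta_1=2\theta$ to place any cusps at $\theta=m\pi/2$, and then confirm the sign change of $R_1$ at the mirror cusps by expanding \eqref{R1Ref}. Your route has a real advantage: it makes explicit use of the hypothesis that the caustic is vertical, which is where the existence of a cusp at the vertical-tangency points actually comes from --- note that \eqref{R1Ref} gives $R_1=\tfrac14 R'\big(\tfrac\pi2\big)$ at $\theta=\tfrac\pi2$, which need not vanish for an arbitrary vertical mirror, so without the verticality of the caustic (read, as you correctly insist, in the no-inflection sense, to rule out a sign change of $R_1$ through infinity) one only gets a horizontal tangent there, not a cusp. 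The paper's optical argument is more intuitive and explains \emph{why} the tangent is horizontal, but yours is the more airtight derivation of the cusp itself. The only loose end is cosmetic: your local expansion at $\theta=k\pi$ should carry the factor $\cos(k\pi)=\pm1$, i.e.\ $R_1\sim(-1)^k R'(k\pi)(\theta-k\pi)$, though the sign change you need is unaffected.
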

\begin{proof} In the absence of inflection points $R$ can only change sign at cusps, and non-horizontal cusps are inconsistent with verticality. Hence $R(\theta)$ keeps the same sign between multiples of $\pi$ (where the cusps are), and the mirror is either convex or concave between them. Conversely, if $R$ switches signs at multiples of $\pi$ then $R\sin\theta$ is positive or negative for all $\theta$, and $y(\theta)$ is a monotone function of $\theta$ by \eqref{Inc-Par}. Hence $x(y)$ is a function graph.

Since the rays reflected at horizontal or vertical points on the mirror remain horizontal, and the caustic is tangent to them, it must have horizontal tangents at those points. And since the nearby rays are reflected to the opposite sides of this tangent the unit tangent to the caustic reverses direction at them, i.e. the caustic has cusps there. When $\theta=\pi n$ we have $\sin\theta=0$ and \eqref{ParCaust} shows that $c(\pi n)=r(\pi n)$. When $\theta=\pi\big(n+\frac12\big)$ the $y$ coordinates of $c$ and $r$ are the same because of the $\sin2\theta$ factor, and the horizontal distance is $\frac12|R|$ because $|\sin\pi\big(n+\frac12\big)|=1$.
\end{proof} 
\noindent These facts are illustrated by the example of circle and nephroid in Figure \ref{Mirror}(b). In particular, if the mirror has locally maximal or minimal curvature at the vertical points (as in the case of the circle) then the corresponding cusps of the caustic bisect the horizontal segments connecting them to the mirror's centers of curvature at those points. Note, however, that vertical mirrors are not obliged to have vertical caustics, the latter can even have self-intersection points \cite{BGG}. 

From now on we will assume that our mirrors are concave (between cusps) in the direction of incoming rays, so that their caustics are visible rather than virtual. Let us now turn to designing mirrors whose caustics reproduce their shape. For caustics by reflection the similarity equation \eqref{SimEq} simplifies to
\begin{equation}\label{RefSimi}
\sin\theta\,R'(\theta)+3\cos\theta\,R(\theta)=4aR(\pm(2\theta-\beta)).
\end{equation}
In contrast to evolutes and skew-evolutes, even point by point case $\beta=0$ does not lead to a first order ODE here. The only exception is when $a=0$ and the caustic is a single point. Integrating it we find $R(\theta)=\frac{A}{\sin^3\theta}$. The corresponding parametric equations (without additive constants) are, according to \eqref{Inc-Par}:
$$
x(\theta)=\int R(\theta)\cos\theta\,d\theta=-\frac{A}{2\sin^2\theta};\ \ \ \ \ \ \
y(\theta)=\int R(\theta)\sin\theta\,d\theta=-A\cot\theta\,.
$$
From trigonometric identities the equation in $x$-$y$ coordinates is $y^2+2Ax=-A^2$, which is a family of horizontal parabolas with {\sl latus rectum} $2A$. Note that we do not need to extend $\theta$ beyond $(0,\pi)$ for the parabolic mirror, this interval already gives the entire parabola. The property of parabolas to focus a beam of parallel light rays at a single point is well-known from classical geometry, see Figure \ref{CataCaust}(a), not the least because of the semi-legendary story of Archimedes burning the Roman fleet with a parabolic mirror. But it is satisfying to find the shape by honest toil, rather than by simply recalling a known property of parabolas. 

From now on we will restrict to the point by point similarity. This means that we want to solve the equation 
\begin{equation}\label{RefPant}
\sin\theta\,R'(\theta)=4aR(2\theta)-3\cos\theta\,R(\theta),
\end{equation}
which is neither ordinary nor even delay differential equation that we encountered earlier. An equation of this sort, $y'(t)=y(\lambda t)$, first appeared in Mahler's work on  partitions in 1940, but was not much studied until 1970-s, when Ockendon and Tayler derived a more general form with constant coefficients, $y'(t)=ay(\lambda t)+by(t)$. It described a model of a current collection system, the so-called {\it pantograph}, that one sees on top of electric trains connecting them to the wires above \cite{Fox}. Since 1990-s functional differential equations with arguments of the form $\lambda t$, even with multiple $\lambda$-s and variable coefficients, came to be called pantograph equations, see \cite{DGT,M-PN} for recent reviews. But most of examples considered in the literature have constant coefficients, and even \cite{M-PN}, that does allow variable coefficients, assumes that they are non-singular at $\theta=0$, i.e. the coefficient in front of the derivative does not vanish there. This is the worst case scenario analytically, and little is known about solutions to such equations.
\begin{definition}\label{MirPant} We call  \eqref{RefPant} the {\bf mirror pantograph equation} and curves corresponding to its solutions  {\bf pantograph mirrors}.
\end{definition}
\begin{figure}[!ht]
\centering
(a) \includegraphics[width =45mm,angle=90,origin=c]{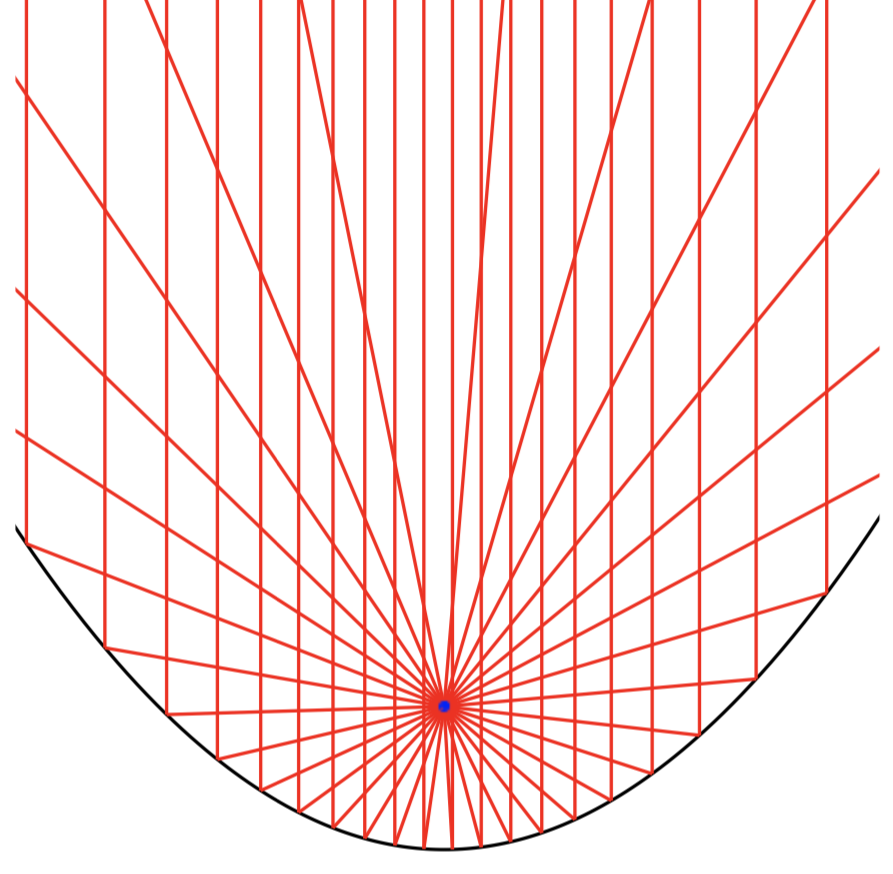}
\hspace{0.7in}
(b) \includegraphics[width =45mm,origin=c]{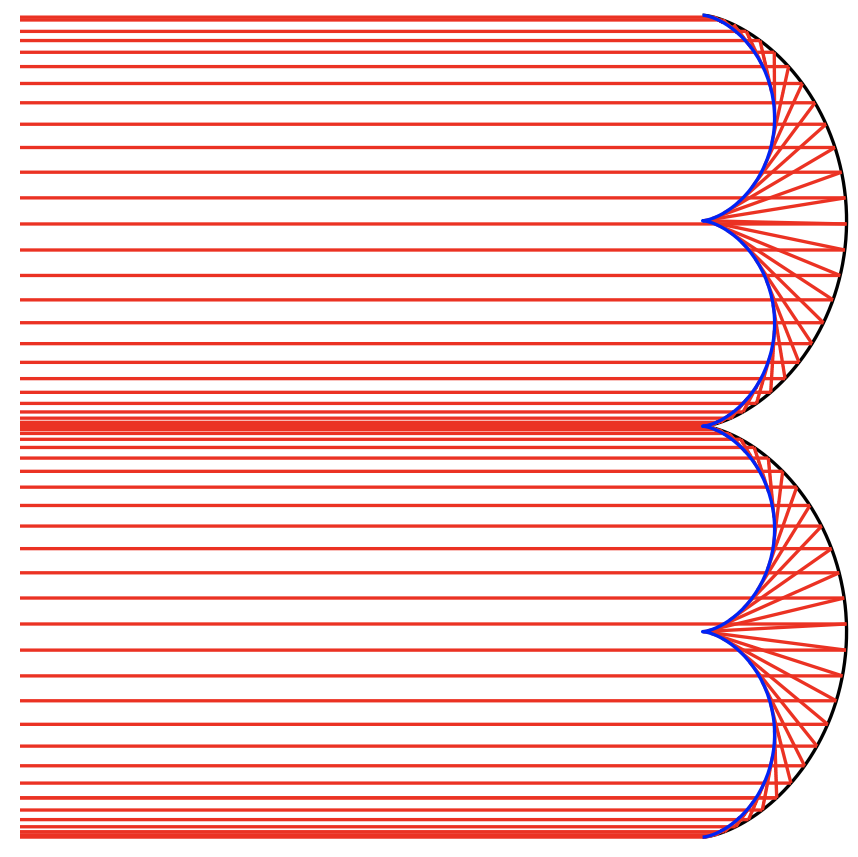}
\caption{\label{CataCaust} Reflection diagrams and caustics by reflection of a (a) parabolic mirror; (b) cycloidal mirror.}
\end{figure}

There is more bad news. Assuming a smooth concave mirror we expect it to have a vertical tangent at some point, which means the caustic must have a cusp at the corresponding point (excluding parabolic mirrors when the entire caustic collapses into a single point). But how can a mirror with no cusps have a congruent or similar caustic that has them? It seems that \eqref{RefPant} has no geometrically meaningful solutions at all! But this is too hasty. The mirror can have cusps at the edges, i.e. at $\theta=0,\pi$, and an arc of the caustic between its cusps could reproduce the mirror's shape. If we extend the mirror indefinitely up, as we did, perhaps it can be similar to its caustic even strictly speaking. We will show that this is indeed the case.

Consider an auxiliary equation obtained from  \eqref{RefPant} by substituting $R=Q\sin\theta$:
\begin{equation}\label{RefPantQ}
\tan\theta\,Q'(\theta)=8aQ(2\theta)-4Q(\theta).
\end{equation}
It is easy to see that $Q=\const$ is a solution for $a=\frac12$, and up to scale this gives $R=\sin\theta$. This is the classical cycloidal mirror \cite{McL}, see Figure \ref{CataCaust}(b). It gives, in a sense, the ideal configuration for a mirror similar to its caustic. Within each arc of the cycloid exactly two arcs of the caustic's half-sized cycloid fit. Extending the mirror and the caustic up indefinitely we get two globally similar curves.
\begin{figure}[!ht]
    \centering
\includegraphics[width=150mm]{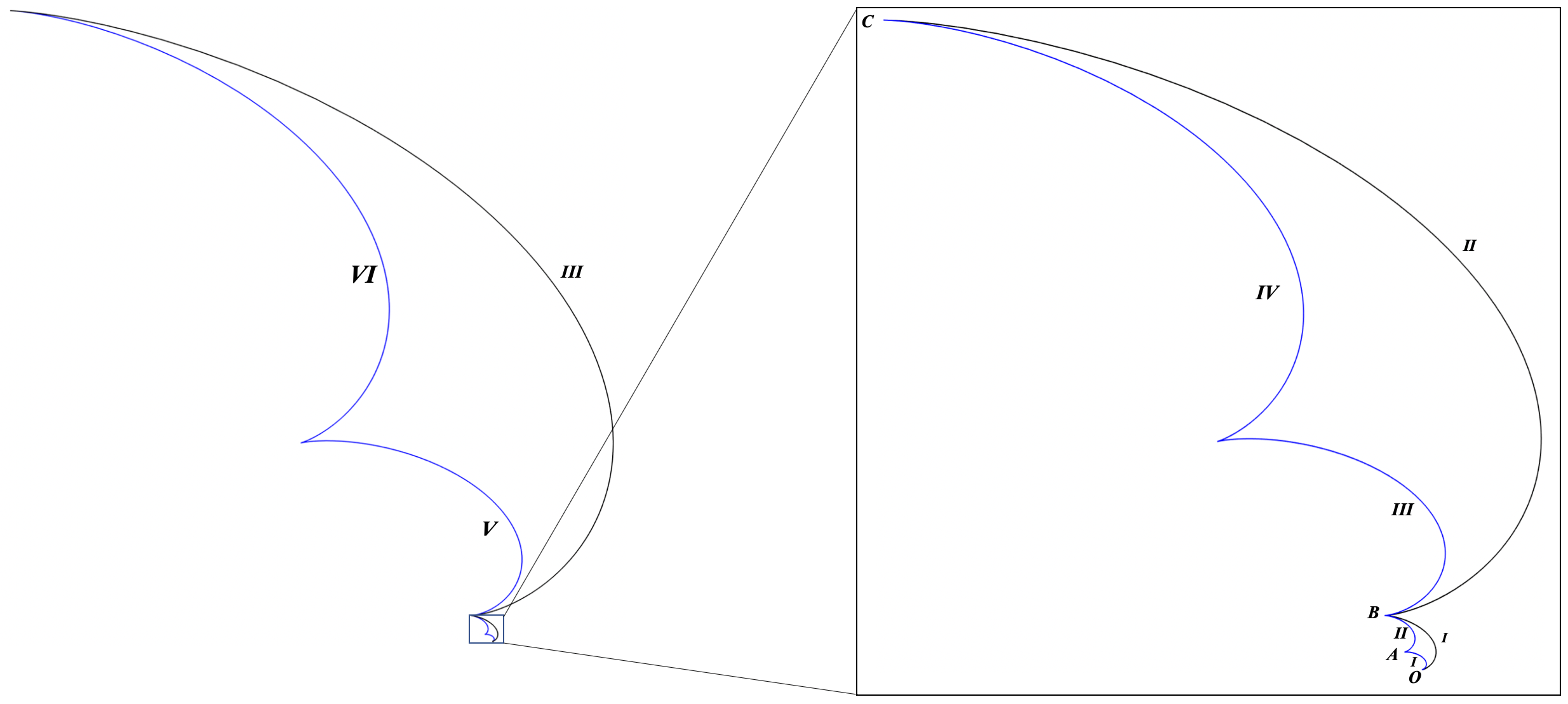}\hspace{0.7in}
\caption{Similarity structure of a pantograph mirror and its caustic: identically labeled arcs are similar to each other.}
    \label{ScaledArc}
\end{figure}

Are there mirrors other than the cycloidal mirror that are similar to their caustics like this? At first glance, it seems that the answer must be no. We certainly can not fit three or more caustic arcs within a single arc of a concave mirror. Each cusp would have to correspond to a vertical tangent to the mirror, and there is only one of those on it. Thus, we expect a chain of concave arcs with cusps at the ends, each of which generates a caustic with two arcs separated by a cusp, whose other cusps coincide with the cusps of the mirror by Corollary \ref{CuspLoc}, see Figure \ref{ScaledArc}. Here is a simple observation about the shape of such mirrors.
\begin{corollary}\label{CuspLine} 
Given a vertical pantograph mirror with horizontal tangent at $\theta=0$ the cusps of its caustic corresponding to the mirror's angles of inclination $\theta=0,\frac\pi2,\pi,2\pi,4\pi,\dots$ lie on the same line.
\end{corollary}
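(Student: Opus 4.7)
The plan is to reduce the similarity of the pantograph mirror to its caustic to a pure scaling about $r(0)$, and then to iterate this scaling along the doubling built into the pantograph relation. In the point-by-point case $\beta=0$ equation \eqref{RefSimi} gives $R_1(\theta_1)=aR(\theta_1)$, so the caustic's inclination equation is $aR$. Because the parametric formulas \eqref{Inc-Par} are linear in $R$, multiplying the inclination equation by $a$ multiplies the curve's position (measured from its starting point) by $a$; integrating from $\theta=0$ and using $c(0)=r(0)$ from Corollary \ref{CuspLoc},
\[
c_{\mathrm{caust}}(\theta_1)=r(0)+a\bigl(r(\theta_1)-r(0)\bigr).
\]
Since $\theta_1=2\theta$ (Theorem \ref{CaustInc} with $\phi=\tfrac{\pi}{2}-\theta$) this becomes the key identity $c(\theta)=S\bigl(r(2\theta)\bigr)$, where $S(X):=r(0)+a(X-r(0))$ is the similarity fixing $r(0)$.

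Applying this identity at mirror cusps $\theta=n\pi$, and using again from Corollary \ref{CuspLoc} that $c(n\pi)=r(n\pi)$, gives the recursion
\[
r(n\pi)-r(0)=a\bigl(r(2n\pi)-r(0)\bigr).
\]
Starting from $n=1$ and iterating with $n=2,4,8,\ldots$ yields by induction
\[
r(2^k\pi)-r(0)=\frac{1}{a^k}\bigl(r(\pi)-r(0)\bigr)\qquad(k\geq 0).
\]
Thus all mirror cusps $r(\pi),r(2\pi),r(4\pi),\ldots$ lie on the line $\ell$ through $r(0)$ with direction $r(\pi)-r(0)$; and since $c(2^k\pi)=r(2^k\pi)$, the caustic cusps at the mirror's inclinations $\theta=\pi,2\pi,4\pi,\dots$ also lie on $\ell$.

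It remains to place the cusp at $\theta=\pi/2$ on $\ell$. Applying the key identity once more,
\[
c(\pi/2)=S\bigl(r(\pi)\bigr)=r(0)+a\bigl(r(\pi)-r(0)\bigr),
\]
which lies on $\ell$ by construction. Together with $c(0)=r(0)\in\ell$, this places every cusp listed in the statement on the single line $\ell$.

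The main obstacle is justifying the passage from the inclination-equation identity $R_1=aR$ to the geometric statement that the caustic is a pure scaling of the mirror about $r(0)$ --- one must check that the only degree of freedom (the integration constants in \eqref{Inc-Par}) is already fixed by the coincidence $c(0)=r(0)$, so that no rotation or shear is introduced. Once this is in hand, the remainder is a one-step induction exploiting the doubling $\theta\mapsto 2\theta$ encoded in the pantograph equation.
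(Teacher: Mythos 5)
Your proof is correct and takes essentially the same route as the paper's: both rest on the point-by-point similarity being a homothety without rotation anchored at $r(0)$ (your map $S$), combined with the doubling $\theta_1=2\theta$ and the coincidence $c(n\pi)=r(n\pi)$ from Corollary \ref{CuspLoc}. The paper phrases the induction arc by arc, chaining collinear cusp triples via parallel segments, where you iterate the global identity $c(\theta)=S\bigl(r(2\theta)\bigr)$; the integration-constant issue you flag is resolved exactly as you indicate, by matching the two curves at the common point $c(0)=r(0)$.
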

\begin{proof} Place the origin $O$ at $r(0)$ and label the listed cusps $A$, $B$, $C$, etc, see Figure \ref{ScaledArc}. By assumption, arcs I of the mirror and the caustic start at the origin and are similar without rotation or reflection. Hence the segment $OB$ is obtained by central scaling of $OA$ with the scaling factor $a^{-1}$, and points $O$, $A$, $B$, corresponding to $\theta=0,\frac\pi2,\pi$, are on the same line. By the same argument applied to arcs II, $A$, $B$, $C$, corresponding to $\theta=\frac\pi2,\pi,2\pi$, are also on the same line. Hence all four points are on the same line, and the general conclusion follows by induction.
\end{proof} 
Equation \eqref{RefPant} means that the first arc of the caustic is similar to the first arc of the mirror, second to the second, and so on. In the case of the cycloid {\it both} initial arcs of the caustic were similar to the first arc of the mirror that generates them (and hence to each other). This then implied that {\it all} arcs of the mirror and of the caustic are similar, and all cusps are on the same line, the $y$-axis. This might have been a lucky accident, and too much to ask for in general. Thus, in the next section we will study  equation \eqref{RefPantQ} and non-cycloidal mirrors with a weaker similarity structure sketched on Figure \ref{ScaledArc}. 

\section{Solving the mirror pantograph}\label{Panto}

We already converted the mirror pantograph equation \eqref{RefPant} into a simpler form \eqref{RefPantQ}. To get some intuition about the solutions to the latter let us consider a simplified equation that approximates it near $0$:
\begin{equation}\label{LinPantQ}
\theta Q'(\theta)=8aQ(2\theta)-4Q(\theta).
\end{equation}
Conjecturing a solution of the form $Q(\theta)=\theta^k$, we get the characteristic equation $k=2^{k+3}a-4$. Hence such solutions exist only for special similarity factors $a=\frac{k+4}{2^{k+3}}$. Indeed, one can show more. Assuming that $Q(\theta)$ has a Laurent expansion at $0$, the equation implies that it must be a single power $\theta^k$, with $k$ an integer, or a linear combination of powers with the same $a$ (this only happens for $k=-3,-2$ with $a=1$). Thus, for $a=\frac{k+4}{2^{k+3}}\neq1$ with an integer $k$ we should expect a single solution to \eqref{RefPantQ} (up to scale) that behaves like $\theta^k$ at $0$.
To prove it, we recall the Taylor expansion of $\tan\theta$ at $0$:
\begin{equation}
    \tan\theta=\tau_0\theta+\tau_2\,\theta^3+\tau_4\,\theta^5+\dots=\sum_{n=0}^{\infty}\tau_{2n}\,\theta^{2n+1},
\end{equation}
where $\tau_{2n}=2\frac{2^{2n}-1}{\pi^{2n}}\,\zeta(2n)$, and $\zeta$ is the Riemann zeta function $\ds{\zeta(s):=\sum_{n=1}^\infty \frac{1}{n^s}}$. 
Obviously, $\zeta(s)$ is monotone decreasing for real $s>1$, and since $\zeta(2)=\frac{\pi^2}{6}$ we have for $k\geq1$:
\begin{equation}\label{TanEst}
0<\tau_{2n}\leq\frac{\pi^2}{3}\left(\frac{2}{\pi}\right)^{2n}.    
\end{equation}
Now we are ready to construct solutions to \eqref{RefPantQ} analogous to $\theta^k$ for \eqref{LinPantQ}.
\begin{lemma}\label{SolQ} Given an integer $k$, there exists an analytic (in a punctured neighborhood of $0$) solution $Q(\theta)=\sum_{n=k}^{\infty}a_n\,\theta^n$ to \eqref{RefPantQ} with $a_k\neq0$ if and only if $a = \frac{k+4}{2^{k+3}}$. Among analytic solutions, this solution is unique up to scale unless $k=-3$, in which case there is a two-parameter family of solutions. Moreover, all $a_n$ with $n$ of the same parity as $k$ have the same sign as $a_k$, the rest are $0$, and $|a_n| \leq M\left(\frac{2}{\pi}\right)^{n}$ for some $M>0$. In particular, the Laurent series for $Q(\theta)$ converges for $|\theta|<\frac\pi2$.
\end{lemma}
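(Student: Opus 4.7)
The strategy is to substitute $Q(\theta)=\sum_{n\geq k}a_n\theta^n$ and $\tan\theta=\sum_{m\geq 0}\tau_{2m}\theta^{2m+1}$ (with $\tau_0=1$) into \eqref{RefPantQ} and match coefficients of $\theta^N$. Splitting off the $m=0$ piece on the left produces the two-term recursion
$$D_N\,a_N \;=\; \sum_{m=1}^{\lfloor(N-k)/2\rfloor}\tau_{2m}\,(N-2m)\,a_{N-2m},\qquad D_N:=2^{N+3}a-(N+4).$$
Since the right-hand side only involves $a_n$ with $n$ of the same parity as $N$, and the chain starts at $n=k$, all coefficients with $n-k$ odd are forced to vanish, which is the parity claim. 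Evaluating at $N=k$ the sum is empty, so $D_k\,a_k=0$, and $a_k\neq 0$ forces $a=(k+4)/2^{k+3}$, establishing the ``only if'' direction.

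For the ``if'' direction, with this value of $a$ we have $D_N=2^{N-k}(k+4)-(N+4)$. A short induction shows $D_N>0$ and $D_N\gtrsim (k+4)\,4^{(N-k)/2}$ for $N=k+2j$, $j\geq 1$, whenever $k\geq -3$. The recursion then determines each $a_{k+2j}$ uniquely from $a_k$, giving a solution unique up to scale. The exception at $k=-3$ arises because the map $k\mapsto(k+4)/2^{k+3}$ takes the same value $1$ at both $k=-3$ and $k=-2$: when $a=1$, both $D_{-3}$ and $D_{-2}$ vanish, so $a_{-3}$ and $a_{-2}$ are independent free parameters; the rest of the series is recursively determined, yielding a two-parameter family whose generic member mixes parities.

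The main obstacle — and the only step beyond formal manipulation — is the growth estimate. Set $c_N:=|a_N|(\pi/2)^N$. The triangle inequality applied to the recursion, together with the key consequence $\tau_{2m}(\pi/2)^{2m}\leq \pi^2/3$ of \eqref{TanEst}, yields
$$c_N \;\leq\; \frac{\pi^2}{3\,D_N}\sum_{m=1}^{(N-k)/2}|N-2m|\,c_{N-2m}.$$
Assuming inductively that $c_j\leq M$ for $k\leq j<N$, the right-hand side is bounded by $\tfrac{M\pi^2 N(N-k)}{6\,D_N}$. Because $D_N$ grows exponentially in $N-k$ while $N(N-k)$ grows only algebraically, this ratio is $\leq 1$ for all $N$ past some threshold $N_0=N_0(k)$; enlarging $M$ to absorb the finitely many initial values $c_k,\ldots,c_{N_0}$ closes the induction and gives $|a_n|\leq M(2/\pi)^n$. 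Hence the series converges on $|\theta|<\pi/2$ (punctured at $0$ when $k<0$). For the sign claim, in the regime $k\geq 1$ every factor $\tau_{2m},(N-2m),D_N$ appearing in the recursion is strictly positive, so a direct induction shows that all nonzero $a_n$ inherit the sign of $a_k$. The delicate point throughout is the exponential dominance of $D_N$ over the polynomial weights: verifying it requires both the quantitative tail estimate just described and checking the first few values of $N$ by hand, which is the reason we absorb them into the constant $M$.
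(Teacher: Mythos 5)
Your proposal is correct and follows essentially the same route as the paper's proof: the same series substitution yielding the same recursion, the same indicial equation $D_k a_k=0$ forcing $a=(k+4)/2^{k+3}$, the same identification of the $k=-3$ (i.e.\ $a=1$) degeneracy, and the same inductive bound $|a_n|\leq M(2/\pi)^n$ obtained from \eqref{TanEst} by absorbing finitely many initial coefficients into $M$. Your explicit restrictions (positivity of $D_N$ for $k\geq-3$, the sign claim for $k\geq1$) are in fact more careful than the paper's blanket statements, and they cover the range $k\geq-1$ actually used later.
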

\begin{proof}
We will look for a solution in the form  $Q(\theta)=\sum_{n=k}^{\infty}a_n\,\theta^n$ with $a_{k}\neq0$. Substituting this series and the Taylor series for $\tan\theta$ at $0$ into \eqref{RefPantQ}, and equating the coefficients of $\theta^n$ we obtain the equation:
\begin{equation}\label{anrecurs}
    (2^{n+3}a-4)\,a_n = 
    na_n + \tau_2(n-2)\,a_{n-2} + 
    \tau_4(n-4)\,a_{n-4} + \dots
\end{equation}
The sum is finite because $a_m=0$ for $m<k$.
Note that $(2^{k+3}a-4-k)a_k=0$, so $\ds{a = \frac{k+4}{2^{k+3}}}$. There are no further restrictions on $a_k$, so it can be selected arbitrarily. The same equation holds with $k$ replaced by $k+1$, and it implies that $a_{k+1}=0$, unless $k=-3$ and $a=1$. In that case $a_{-2}$ is also arbitrary. Since $a_{k+1}=0$ all  coefficients with indices of the opposite parity are $0$ by \eqref{anrecurs}. Solving \eqref{anrecurs} formally for $a_n$ we get the recursion
\begin{equation}\label{anRec}
a_n=\frac{\tau_2(n-2)a_{n-2}+\tau_4(n-4)a_{n-4}+\dots}{2^{n+3}\,a-n-4} = \frac{1}{2^{n+3}a-n-4}\sum_{i=1}^{\lfloor\frac{n-1}{2}\rfloor} \tau_{2i}(n-2i)\,a_{n-2i}.
\end{equation}
The denominator can be rewritten as $\ds{2^{n+4}\left(\frac{a}2-\frac{n+4}{2^{n+4}}\right)}$. By elementary calculus, $f(t):=\frac{t+4}{2^{t+4}}$ is strictly monotone decreasing for $t+4>\frac1{\ln2}$, and $f(k)=\frac{a}2$ by assumption on $a$. Hence $\frac{a}2-f(n)>0$, and the denominator is strictly positive for $n>k$. 

Thus, the recursion \eqref{anRec} is well defined, and implies that all $a_n$ of the same parity as $k$ are non-zero and have the same sign. It remains to show that the series with the coefficients the recursion determines converges. Pick $N$ large enough to make   
$$
\ds{\dfrac{(n-\lfloor\frac{n}{2}\rfloor)\lfloor\frac{n}{2}\rfloor}{2^{n+4}\,a-n-5}\frac{\pi^2}{3}\leq1}
$$ 
for $n\geq N$. Denote $R = \frac{\pi}{2}$ and $A=\frac{\pi^2}{3}$, and set $\ds{M:=\max_{n \leq N} \left(|a_n|R^{n}\right)}$. By definition of $M$, we have $|a_n| \leq \dfrac{M}{R^{n}}$ for $n \leq N$, and by  \eqref{TanEst}, $|\tau_{2n}|\leq\dfrac{A}{R^{2n}}$. We proceed by induction assuming that the inequality holds up to $n=N$. By the recursion:
\begin{align*}
    |a_{n+1}| &\leq \frac{1}{2^{n+4}a-n-5}\sum_{i=1}^{\lfloor\frac{n}{2}\rfloor} |\tau_{2i}||n+1-2i||a_{n+1-2i}| \\
    &\leq \dfrac{1}{2^{n+4}a-n-5} \sum_{i=1}^{{\lfloor\frac{n}{2}\rfloor}}(n+1-2i)\dfrac{A}{R^{2i}}\cdot\dfrac{M}{R^{n+1-2i}}\\
    &\leq\dfrac{1}{2^{n+4}a-n-5}\dfrac{AM}{R^{n+1}}\sum_{i=1}^{{\lfloor\frac{n}{2}\rfloor}}(n+1-2i)
=\dfrac{(n-\lfloor\frac{n}{2}\rfloor)\lfloor\frac{n}{2}\rfloor A}{2^{n+4}a-n-5}\cdot\dfrac{M}{R^{n+1}}.
\end{align*}
By definitions of $N$ and $A$, the coefficient in front of $\ds{\frac{M}{R^{n+1}}}$ is $\leq1$, and the induction step is concluded.
\end{proof}
\noindent Note that if $Q\sim\theta^k$ with $k<-1$ then $R=Q\sin\theta$ will have a pole at $0$, indeed, the parabolic mirror corresponds to $k=-4$. We will now restrict our attention to $k\geq-1$ and convert solutions to the auxiliary equation \eqref{RefPantQ} into those for the original one \eqref{RefPant}. The idea is to use the doubling of the argument in the pantograph equation to extend the solution from a neighborhood of zero to the entire half-axis $\theta\geq0$.
\begin{theorem} For any integer $m\geq0$, there exists a global analytic solution $R(\theta)$ to \eqref{RefPantQ} on $[0,\infty)$ with $R(\theta)\sim\theta^m$ near $0$ if and only if $\ds{a = \frac{m+3}{2^{m+2}}}$. This solution is unique up to scale.
\end{theorem}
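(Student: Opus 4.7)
The plan is to reduce the theorem to Lemma \ref{SolQ} via the substitution $R=Q\sin\theta$ already introduced, and then extend the resulting local solution to $[0,\infty)$ by exploiting the doubling of the argument in the pantograph equation itself. The substitution converts \eqref{RefPant} into \eqref{RefPantQ}, so a real-analytic $R$ on $[0,\infty)$ with $R(\theta)\sim\theta^m$ ($m\geq0$) near $0$ corresponds to a (possibly Laurent) analytic solution $Q$ of \eqref{RefPantQ} with leading term $\theta^{k}$ for $k = m-1 \geq -1$. Lemma \ref{SolQ} then forces $a = (k+4)/2^{k+3} = (m+3)/2^{m+2}$, which handles the ``only if'' direction, and for this $a$ produces (uniquely up to scale, since $k\geq -1 > -3$ excludes the two-parameter case) a Laurent series for $Q$ converging on $|\theta|<\pi/2$. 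Multiplying by $\sin\theta$ yields a real-analytic $R$ on $(-\pi/2,\pi/2)$ with the required leading order that satisfies \eqref{RefPant} wherever both $\theta$ and $2\theta$ lie in that interval.

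The global extension is the substantive step. Because $a\neq 0$, \eqref{RefPant} can be solved algebraically for the value at the doubled argument,
\begin{equation*}
R(2\theta) \;=\; \frac{\sin\theta\,R'(\theta)+3\cos\theta\,R(\theta)}{4a},
\end{equation*}
and I would use this as a bootstrap. The inductive hypothesis I propagate is: $R$ is real-analytic on $[0,T)$ and satisfies \eqref{RefPant} on $[0,T/2)$. The right-hand side is then a real-analytic function of $\theta$ on $[0,T)$, and reinterpreted as a function of $t = 2\theta$ it prescribes an analytic $R(t)$ on $[0,2T)$. On the overlap $t \in [0,T)$ this new prescription equals the old $R(t)$, precisely because the pantograph equation held on $[0,T/2)$; so we obtain a genuine analytic extension of $R$ to $[0,2T)$, and by construction this extension satisfies \eqref{RefPant} on $[0,T)$, readying the next induction step. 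Starting from $T = \pi/2$ with the local solution above and iterating, the domain doubles at every step and eventually covers $[0,\infty)$.

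Uniqueness follows by the same mechanism: two global analytic solutions with the prescribed leading order give, after division by $\sin\theta$, two Laurent solutions of \eqref{RefPantQ} near $0$ of order $\theta^{m-1}$; Lemma \ref{SolQ} makes them proportional on $(-\pi/2,\pi/2)$, and the doubling extension is deterministic, so the proportionality propagates to all of $[0,\infty)$. The main difficulty I anticipate is not conceptual but bookkeeping: verifying that the doubling formula truly preserves real-analyticity and consistency at each stage, and keeping straight which half of the doubled interval is being controlled by which instance of the equation. All of this reduces to the observation that $\sin$, $\cos$, differentiation, and division by the nonzero constant $4a>0$ send real-analytic functions on a half-open interval to real-analytic functions there.
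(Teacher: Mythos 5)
Your proposal is correct and follows essentially the same route as the paper: reduce to Lemma \ref{SolQ} via $R=Q\sin\theta$ to obtain the local solution and the value of $a$, then extend globally by iterating the doubling formula $R(2\theta)=\frac{1}{4a}\bigl(3\cos\theta\,R(\theta)+\sin\theta\,R'(\theta)\bigr)$. Your bookkeeping of the induction hypothesis (analytic on $[0,T)$, equation satisfied on $[0,T/2)$) and the explicit uniqueness argument are, if anything, slightly more careful than the paper's version.
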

\begin{figure}[!ht] 
\centering
(a) \includegraphics[width =40mm,origin=c]{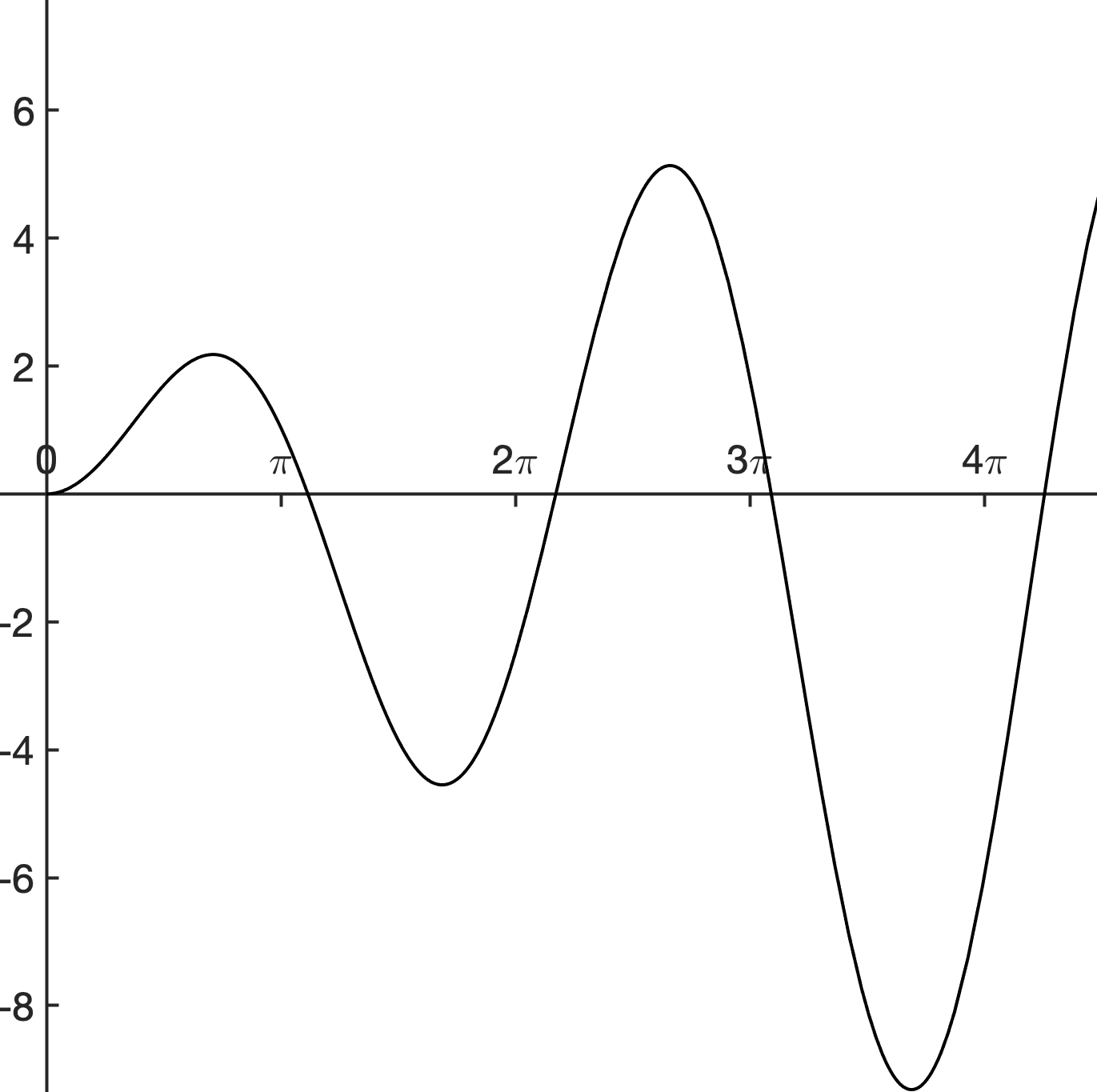}
\hspace{0.2in}
(b) \includegraphics[width =40mm,origin=c]{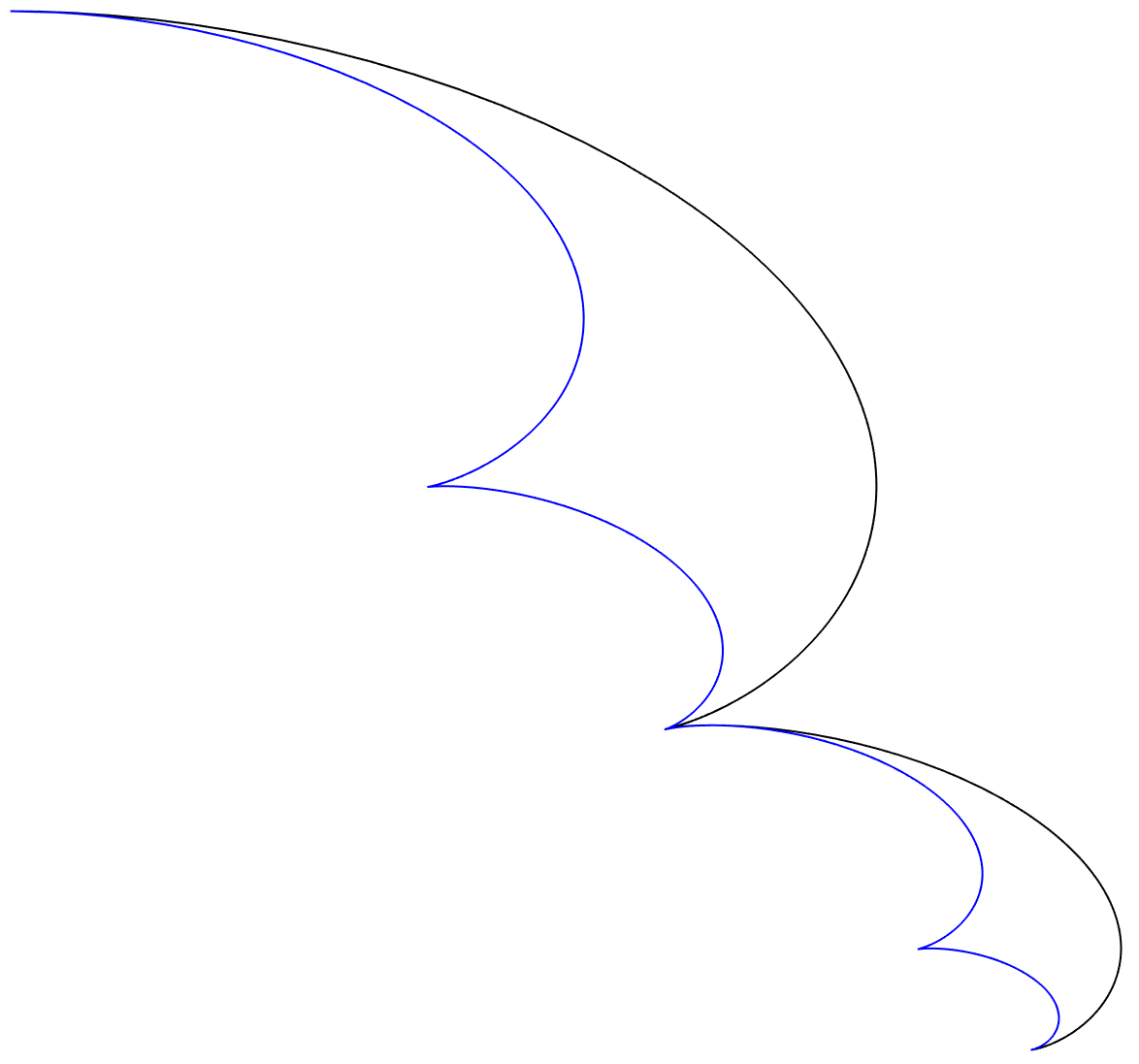}
\hspace{0.2in}
(c) \includegraphics[width =40mm,origin=c]{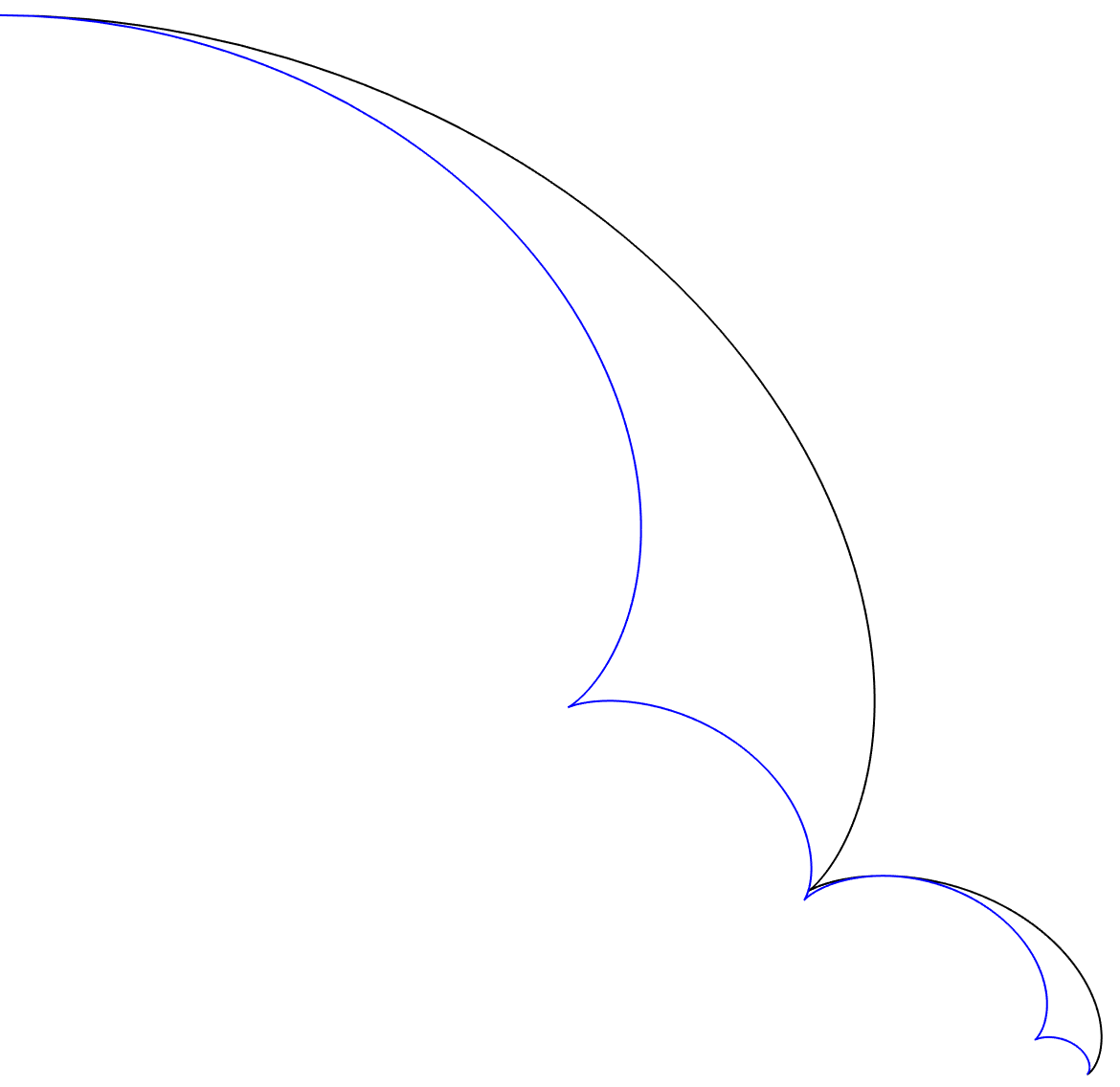}
\caption{\label{RDoubling} Analytic pantograph mirrors similar to their caustics: (a) graph of $R(\theta)$ for $m=2$, $a=\frac5{16}$; (b) mirror and caustic for $m=2$, $a=\frac5{16}$; (c) for $m=3$, $a=\frac3{16}$.}
\end{figure}
\begin{proof}
By Lemma \ref{SolQ} we have such a solution $R(\theta)=Q(\theta)\sin\theta$ on $[0,\frac\pi2)$. Rearranging the terms, we can rewrite \eqref{RefPant} as
\begin{equation}\label{RefDoub}
R(2\theta)=\frac1{4a}\Big(3\cos\theta\,R(\theta)+\sin\theta\,R'(\theta)\Big)
\end{equation}
As $\theta$ varies over $\big[0,\frac\pi2\big)$ on the right $2\theta$ varies over $[0,\pi)$. The values of $R(\theta)$ on $\big[0,\frac\pi2\big)$ agree with the original ones because it satisfies \eqref{RefPant} there by construction. Hence the doubling formula \eqref{RefDoub} provides an analytic continuation of $R(\theta)$ to $[0,\pi)$, and the continued function satisfies the mirror pantograph equation there. Repeating the process we can continue it to $[0,2\pi)$, $[0,4\pi)$, and so on. By induction, we obtain an analytic solution on $[0,\infty)$.
\end{proof}

Figure \ref{RDoubling} shows the analytic solution generated numerically for $m=2$, $a=\frac5{16}$. For $\theta<\frac\pi2$ we used the power series for $Q(\theta)$ with the coefficients determined recursively from \eqref{anRec} (up to $n=30$), and $R(\theta)=Q(\theta)\sin\theta$. For $\theta\geq\frac\pi2$ we found $R(\theta)$ by iterating the doubling formula \eqref{RefDoub}. At first glance, everything looks in order. But, as one can clearly see on the graph, the computed zeros of $R(\theta)$ deviate from the multiples of $\pi$. As a result, the cusps of the mirror and its caustic are not exactly horizontal, and the effect becomes more pronounced as $m$ increases and $a$ decreases, see Figure \ref{Cata}.
\begin{figure}[!ht] 
\centering
(a) \includegraphics[width =35mm,height=40mm,origin=c]{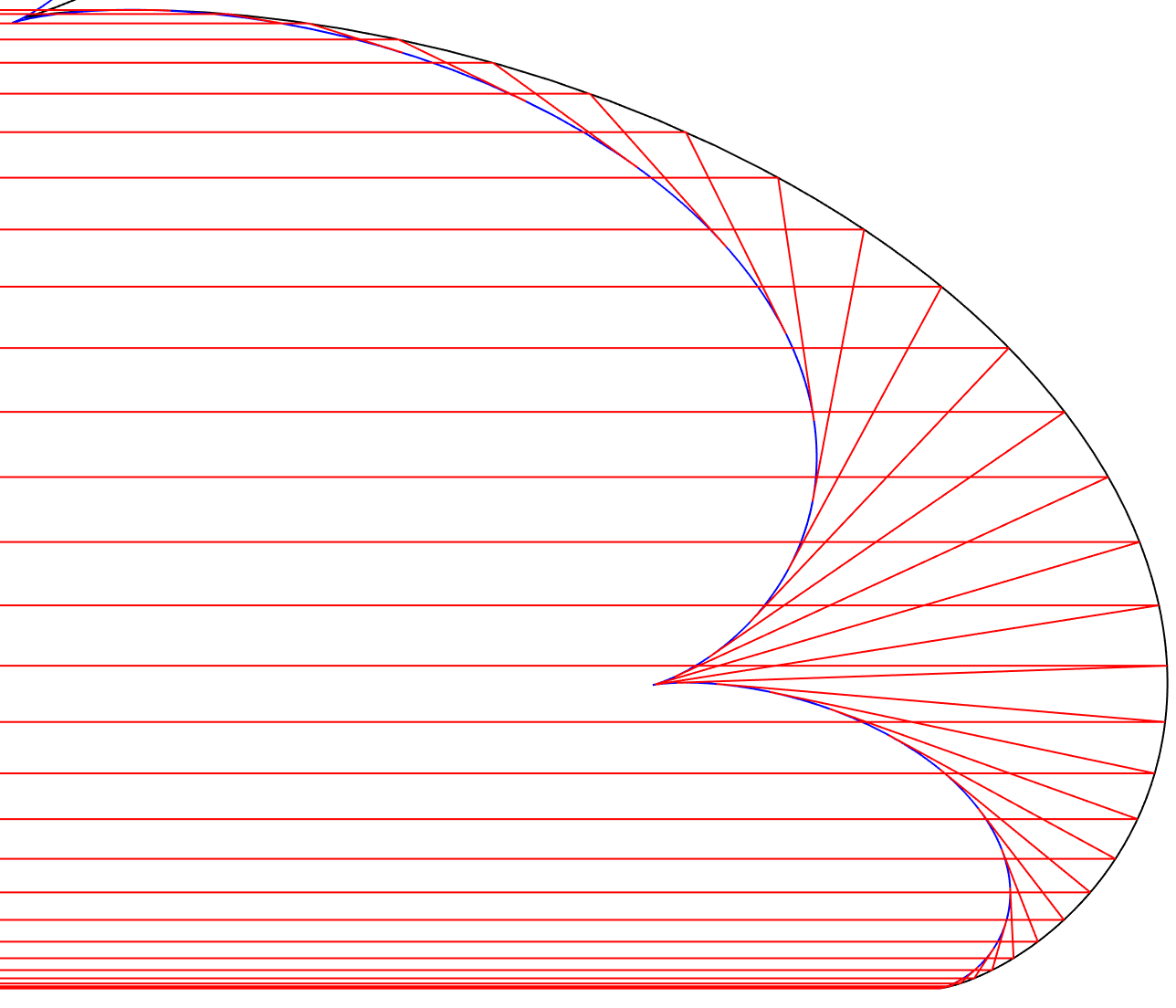}
\hspace{0.7in}
(b) \includegraphics[width =35mm,height=40mm,origin=c]{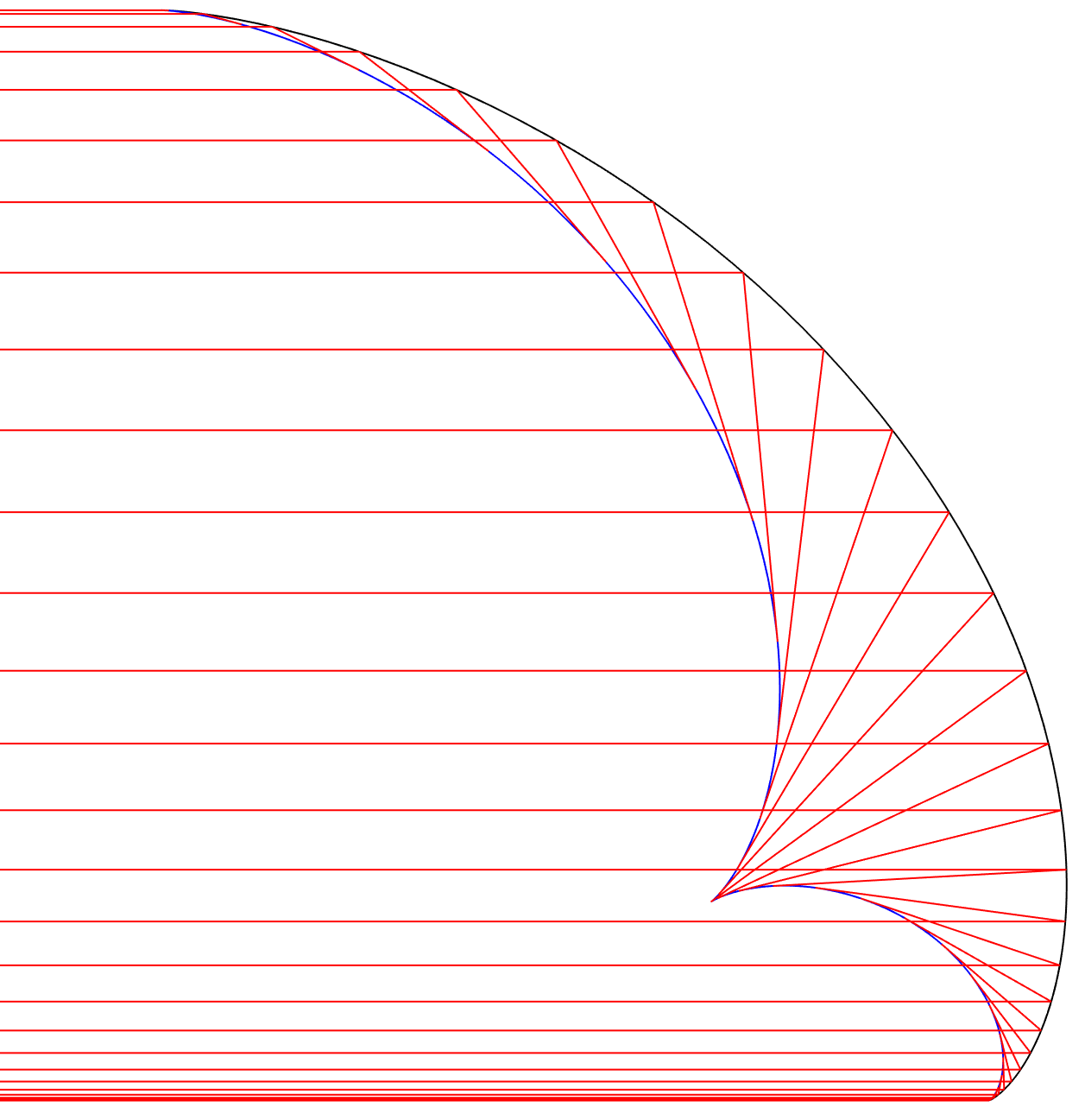}
\caption{\label{Cata} Reflection diagrams of analytic pantograph mirrors and their caustics, middle cusps tilt down from the horizontal: (a) $m=2$, $a=\frac{5}{16}$; (b) $m=3$, $a=\frac{3}{16}$. }
\end{figure}
The cusps are not quite on the same line either, although that is harder to detect visually, it is more visible for $m=3$, Figure \ref{RDoubling}(c). 

The reason for this disappointing result is that equation \eqref{RefPant} does not by itself guarantee verticality of the mirror, and hence its physical feasibility (recall that even the caustic of a vertical mirror may not be vertical). We also need $R(\theta)$ to switch signs at multiples of $\pi$. Since $R=Q\sin\theta$ it would suffice that $Q$ is finite at multiples of $\pi$. However, Figure \ref{QRat}(a) shows the graph of $Q=Q(\theta)$ generated using the power series, which suggests that it has a pole at $\pi$. 
\begin{figure}[!ht] 
\centering
(a) \includegraphics[width =40mm,origin=c]{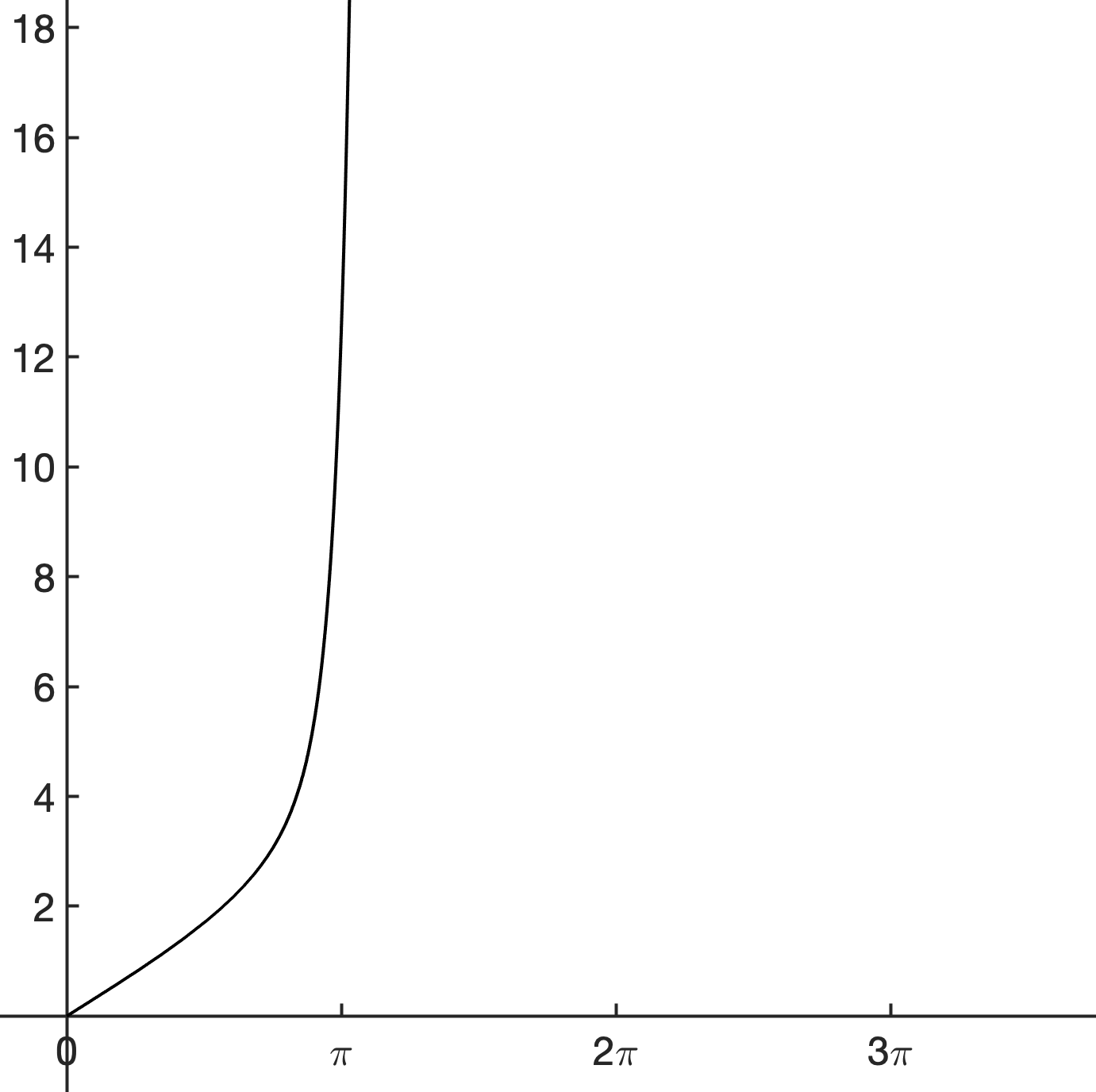}
\hspace{0.9in}
(b) \includegraphics[width =40mm,origin=c]{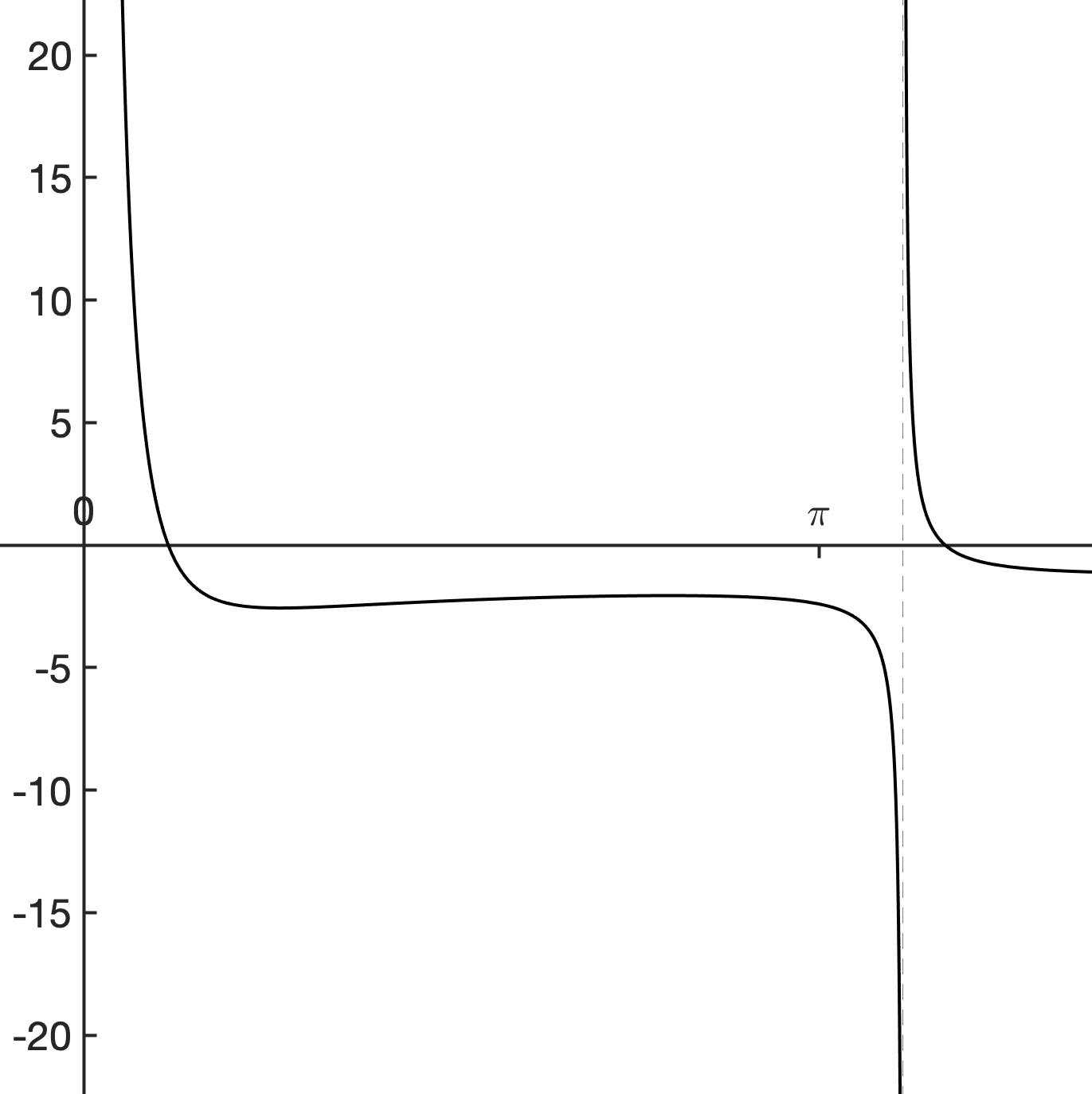}
\caption{\label{QRat} Analytic pantograph solutions for $m=2$, $a=\frac5{16}$: (a) graph of $Q(\theta)$; (b) graph of $\frac{R(\theta+\pi)}{R(\theta)}$.}
\end{figure}

In other words, while the mirror pantograph has analytic solutions other than the cycloid, they are not feasible mirrors. Still, the solution for $a=\frac{5}{16}$ is close to being vertical, and one can ask if there is, perhaps, a {\it non-analytic} vertical solution in its vicinity. On Figure \ref{RDoubling}(b) this pantograph mirror looks similar to its caustic even in a stronger sense we saw with the cycloidal mirror, where all arcs between cusps are similar to each other. For a solution to \eqref{RefPant} to have this property it suffices that the first two arcs of the mirror are similar to each other, i.e. that the radii of curvature at $\theta$ and $\pi+\theta$ are in the same ratio for every $\theta$. On Figure \ref{QRat}(b) we graphed this ratio for the $a=\frac5{16}$ mirror to show that it is not, in fact, constant. However, it does not vary much away from the cusps creating the visual impression of approximate similarity, and so one again wonders if there are non-analytic vertical mirrors that are similar to their caustics in this stronger sense.

\end{document}